\documentclass[11pt, reqno]{amsart}
\usepackage{amsmath,amsthm,amssymb,amsfonts}
%,showkeys}
\usepackage{epsfig}
%put showkeys inside userpackage when proofreading

%% (b) theorem environments...
\theoremstyle{plain}
\newtheorem{thm}{Theorem}[section]
\newtheorem{prop}[thm]{Proposition}
\newtheorem{lem}[thm]{Lemma}
\newtheorem{question}[thm]{Question}
\newtheorem{cor}[thm]{Corollary}
\newtheorem{conj}[thm]{Conjecture}

%%...and remark type environments
\theoremstyle{definition}

\newtheorem{rem}[thm]{Remark}
\newtheorem{defn}[thm]{Definition}
\newtheorem{eg}[thm]{Example}
\newtheorem{subtitle}[thm]{}
\newtheorem{ex}{Exercise}[section]
%separate numbering to facilitate solutions
%% (c) numbering
\numberwithin{equation}{section}

\def\a{\alpha}

\def\e{\epsilon}

\def\l{\lambda}

\def\n{\,\vert\,}

\def\w{\omega}

\def\ms{\medskip}

\def\ni{\noindent}
\def\ti{\tilde}

\def\Hess{{\rm Hess\/}}
\def\sgn{{\rm sgn\/}}

\def\ind{{\rm ind\/}}

\def\R{\mathbb{R} }
\def\C{\mathbb{C}}
\def\P{\mathbb{P}}

\def\N{\mathbb{N}}
\def\Z{\mathbb{Z}}

\newcommand{\beq}{\begin{equation}}
\newcommand{\eeq}{\end{equation}}
\newcommand{\beg}{\begin{eg}}
\newcommand{\eeg}{\end{eg}}
\newcommand{\bthm}{\begin{thm}}
\newcommand{\ethm}{\end{thm}}
\newcommand{\bprop}{\begin{prop}}
\newcommand{\eprop}{\end{prop}}
\newcommand{\bcor}{\begin{cor}}
\newcommand{\ecor}{\end{cor}}
\newcommand{\blem}{\begin{lem}}
\newcommand{\elem}{\end{lem}}
\newcommand{\bca}{\begin{cases}}
\newcommand{\eca}{\end{cases}}
\newcommand{\brem}{\begin{rem}}
\newcommand{\erem}{\end{rem}}
\newcommand{\bconj}{\begin{conj}}
\newcommand{\econj}{\end{conj}}
\newcommand{\bpm}{\begin{pmatrix}}
\newcommand{\epm}{\end{pmatrix}}
\newcommand{\bbm}{\begin{bmatrix}}
\newcommand{\ebm}{\end{bmatrix}}
\newcommand{\bvm}{\begin{vmatrix}}
\newcommand{\evm}{\end{vmatrix}}
\newcommand{\bdefn}{\begin{defn}}
\newcommand{\edefn}{\end{defn}}
\newcommand{\bsub}{\begin{subtitle}}
\newcommand{\esub}{\end{subtitle}}
\newcommand{\bex}{\begin{ex}}
\newcommand{\eex}{\end{ex}}
\newcommand{\ben}{\begin{enumerate}}
\newcommand{\een}{\end{enumerate}}

\begin{document}

\title{Non-orientable Lagrangian surfaces in rational $4-$manifolds}
\author{Bo Dai}
\address{School of Mathematical Sciences\\
Peking University\\ Beijing 100871, China}
\email{daibo@math.pku.edu.cn}
\author{Chung-I Ho}
\address{Department of Mathematics\\
National Kaohsiung Normal University\\ Kaohsiung 82446, Taiwan}
\email{ciho@nknu.edu.tw}
\author{Tian-Jun Li}
\address{Department of Mathematics \\
University of Minnesota\\ Minneapolis, MN 55455}
\email{tjli@math.umn.edu}

\ms

%\hskip 3in \today %skip date later

\begin{abstract} 
We show that for any 
 {\bf nonzero} class $A$  in $H_2(X;\Z_2)$ in a rational $4-$manifold $X$,  $A$  is represented by a non-orientable embedded Lagrangian surface 
$L$ (for some symplectic structure) if and only if 
$\mathcal P(A)\equiv \chi(L) \pmod4,$
where $\mathcal P(A)$ denotes the mod 4 valued Pontrjagin  square of $A$. 
\end{abstract}

\maketitle
\tableofcontents
\section{Introduction}

%Question: Given a degree 2 integral homology class  with zero symplectic area  in a 
%symplectic  4-manifold $(M, \omega)$, is it represented by an orientable   Lagrangian surface? 

A smooth immersion or embedding $f:L\rightarrow X$ from a smooth manifold $L$ into a symplectic manifold $(X,\omega)$ is called Lagrangian if $\dim L=\frac12\dim X$ and $f^*\omega=0$.
The existence of Lagrangian submanifolds is an important problem in symplectic topology and is studied by many people, see \cite{A1, ALP,B, LS, P} and others.
When $X$ is a uniruled 4-manifold, it is well-known that the only Lagrangian embedding of closed orientable surfaces to $X$ are spheres and tori, and Lagrangian tori are homologically trivial in $X$. 
%(In contrast, there are non-orientable Lagrangian surfaces of arbitrary topological type). 
The existence question for Lagrangian spheres in a uniruled 4-manifold was completely answered in \cite{LW}.
%and  the existence of  Lagrangian ADE configuration was largely understood in \cite{DLW}.

 The  focus of this paper will be on the existence of embedded non-orientable Lagrangian surfaces for a
 given   mod 2 homology class.
%%If $S_k$ denote a sphere with k holes and $MB$ the Mobius band, then $k\R\P^2$ is also diffeomorphic to $S_k\sqcup kMB$.
%%In particular, the Klein Bottle $K$ is diffeomorphic to $\R\P^2\# \R\P^2$.
First of all, we have the following simple observation by the Lagrangian immersion h-principle and the Lagrangian surgery construction.

\begin{prop}\label{general existence}
Let $(X, \omega)$ be a  symplectic 4-manifold.
For any mod 2 homology class $A$, there exists an embedded non-orientable Lagrangian surface representing this class. 
%Moreover, $A$ is also represented by an embedded non-orientable Lagrangian surface diffeomorphic to $L\# (4l)\R\P^2$ for any $l\in \N$.
\end{prop}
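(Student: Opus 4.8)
The plan is to realize $A$ first by an \emph{immersed} Lagrangian surface via the Gromov--Lees h-principle and then to remove the self-intersections by Polterovich's Lagrangian surgery, controlling the mod $2$ homology class and non-orientability along the way. To begin, I would fix a convenient smooth model: since $X$ is a $4$-manifold, $A$ is represented by a connected embedded surface, and by removing small disks and gluing in M\"obius bands supported in Darboux balls I may further assume that the representing surface $F$ is non-orientable and that $\chi(F)\equiv\langle w_2(X),A\rangle\pmod 2$. None of these local modifications changes $[F]=A$. Let $f\colon F\hookrightarrow X$ be the resulting embedding.

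Next I would apply the h-principle. By Gromov--Lees, a Lagrangian immersion of $F$ into $(X,\omega)$ that is homotopic as a map to $f$ — hence still representing $A$ — exists provided the formal obstruction vanishes, namely provided $TF\otimes_{\R}\C\cong f^*TX$ as complex rank $2$ bundles. Over the $2$-complex $F$ such bundles are classified by $c_1\in H^2(F;\Z)\cong\Z_2$. On one side $c_1(f^*TX)$ is the reduction of $f^*c_1(X)$, which is detected by the pairing $\langle w_2(X),A\rangle$; on the other side $c_1(TF\otimes_{\R}\C)$ is $2$-torsion with mod $2$ reduction $w_1(F)^2$, hence is detected by $\langle w_1(F)^2,[F]\rangle=\langle w_2(F),[F]\rangle\equiv\chi(F)\pmod 2$ by the Wu formula. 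Thus the obstruction is exactly the parity identity arranged above, so the h-principle yields a Lagrangian immersion $g\colon F\to X$ with $g_*[F]=A$.

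Finally I would perturb $g$ so that it has only finitely many transverse double points, and perform Polterovich's Lagrangian surgery in a small Darboux ball around each of them. This resolves every self-intersection and produces an embedded Lagrangian surface $L\subset X$. Since each surgery modifies $F$ only inside a ball and $L$ agrees with $g(F)$ outside these balls, one has $[L]=g_*[F]=A$ in $H_2(X;\Z_2)$. Moreover each surgery replaces a neighbourhood of a double point by a Lagrangian annulus, thereby attaching a tube to $F$; attaching a tube to a connected non-orientable surface leaves it connected and non-orientable, so $L$ is as desired and the proposition follows.

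The step I expect to be the main issue is the middle one: pinning down the Gromov--Lees obstruction precisely and recognizing that it reduces to the single $\Z_2$ parity condition $\chi(F)\equiv\langle w_2(X),A\rangle\pmod 2$, which can always be met by inserting one more crosscap without moving $A$. The first and last steps should then be routine, provided one keeps careful track of how local surface modifications and Lagrangian surgery affect the topology and the mod $2$ fundamental class.
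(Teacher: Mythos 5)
Your proposal is correct and follows essentially the same route as the paper: arrange a non-orientable representative $F$ of $A$ with $\chi(F)\equiv\langle w_2(X),A\rangle\pmod 2$ by adding crosscaps, verify the Gromov--Lees formal obstruction reduces to exactly this parity via the classification of rank-two complex bundles over a non-orientable surface (where the other h-principle hypothesis $f^*[\omega]=0$ is automatic since $H^2(F;\mathbb R)=0$), and then resolve the double points of the resulting Lagrangian immersion by Polterovich surgery, which preserves the mod $2$ class and non-orientability. The only cosmetic difference is that you start from an embedded representative while the paper invokes Thom's theorem for a representing map.
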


In light of the general existence, we ask the following question. 
\begin{question} Given a mod 2 class $A$, what are the possible topological type of non-orientable Lagrangian surfaces in the class $A$? Especially, 
what is the maximal Euler number (equivalently, the minimal genus)?
\end{question}

We will study this question for rational  4-manifolds. Here, a smooth $4-$manifold is called rational if it is $S^2\times S^2$ or $\C\P^2\# k\overline{\C\P^2}, k\in \Z_{\geq 0}$.

Recall that each non-orientable surface is diffeomorphic to $N_k=k\R\P^2=\R\P^2\#\cdots\#\R\P^2$ for some $k\in\N$. 
The Euler number of $N_k$ is $2-k$,
%% In particular, $\chi (\R\P^2)=1, \chi (K)=0$.
%%\begin{defn}
and the genus of $N_k$ is defined to be $k$. 
%%\end{defn}
Audin's congruence theorem (\cite{A1})  states that, for a mod 2 class $A$ in an arbitrary symplectic 4-manifold $(X, \omega)$, 
if $A$ is represented by an embedded non-orientable Lagrangian surface $L$,   then  the Pontrjagin square of $A$ is congruent to the Euler number of $L$  modulo $4$.

The Pontrjagin square referred here  is a certain  cohomology operation
$\mathcal P:H^2(X;\Z_2)\to H^4(X;\Z_4)$ which is a lift of the mod 2 cup product (cf. eg. \cite{T}). 
Furthermore, if $A$ is the reduction of an integral class $\bar A$, then $\mathcal P(A)$ is the mod 4 reduction of $\bar A^2$. 
%\begin{rem}
In particular, if $X$ is a rational manifold, then  $H_2(X;\Z)$ has no torsions, especially $2$-torsions, thus    every mod 2 class $A$ has an
integral lift $\bar A$, and  $\mathcal P(A) \equiv \bar A^2\pmod 4$.
%\end{rem}

 For the zero class, it follows from Givental's construction in $\R^4$  (\cite{Gi})
 that there are non-orientable  Lagrangian surfaces with  Euler number divisible by $4$, except possibly the Klein bottle.  Remarkably, it was shown by  Shevchishin and Nemirovski (\cite{S} and \cite{N}) that  the mod 2 class of a  Lagrangian Klein Bottle in a uniruled manifold must be nonzero. 
 Together with Audin's congruence theorem,  the problem for the zero class is completely understood
 for a uniruled manifold.

 Now we assume that $A$ is a nonzero class. 
The first step is to consider the situation where  the symplectic form $\omega$  is not fixed.  
  We apply 
the classification of Lagrangian spheres in \cite{LW}  together with the Lagrangian blow-up construction in \cite{Ri}, and Givental's local construction  
in \cite{Gi} to   show that Audin's congruence is also sufficient  when $A$ is a nonzero class in a rational 
manifold.

\begin{thm}\label{main}
Let  $X$ be a rational 4-manifold
%$X=S^2\times S^2\# k\overline{\C\P^2}$ 
and $A$ be a nonzero class in $H_2(X;\Z_2)$.  Let $\mathcal P(A)$ denote the Pontrjagin  square of $A$. Then $A$ is represented by an embedded non-orientable Lagrangian surface of Euler number $\chi$ for some symplectic structure if and only if 
$$\mathcal P(A)\equiv \chi \pmod4.$$
\end{thm}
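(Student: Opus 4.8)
The forward implication is precisely Audin's congruence theorem, already recalled, so everything reduces to the following: given a nonzero $A\in H_2(X;\Z_2)$ and an integer $\chi\le 1$ with $\chi\equiv\mathcal P(A)\pmod 4$, one must build a symplectic form on $X$ and an embedded non-orientable Lagrangian surface in class $A$ of Euler number $\chi$. The plan is to split this into two steps: (1) realize the \emph{maximal} admissible Euler number $\chi_{\max}(A)$ — the largest integer $\le 1$ congruent to $\mathcal P(A)$ mod $4$, so that the surface is $\R\P^2,N_2,N_3$ or $N_4$ according as $\mathcal P(A)\equiv 1,0,3,2$; and (2) starting from such a surface, lower the Euler number in steps of $4$ until $\chi$ is reached.

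Step (2) is local. If $L$ is an embedded non-orientable Lagrangian and $B$ a Darboux ball meeting $L$ in a Lagrangian disk, one applies Givental's local construction \cite{Gi} inside $B$; concretely, place a small Lagrangian $2$-torus in $B$, perturb it to meet $L$ transversally in two points (the algebraic intersection is $0$, the torus being nullhomologous), and perform Lagrangian surgery at both points — connecting $L$ to the torus at the first and resolving the resulting self-intersection at the second. The result is an embedded Lagrangian $L'$ in the same $\Z_2$-class, still non-orientable, with $\chi(L')=\chi(L)-4$ (a $-2$ step would contradict Audin's theorem, so this is the best one can hope for, and it is attained). Iterating produces every $\chi\le\chi_{\max}(A)$ in the right residue class.

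Step (1) is the heart. The building blocks are the standard Lagrangian $\R\P^2\subset(\C\P^2,\omega_{FS})$ in the class $H$ (Euler number $1$, $\mathcal P=1$), the Lagrangian Clifford torus in $\C\P^2$ in the zero class, and — via the classification of Lagrangian spheres in \cite{LW} together with the freedom to choose $\omega$ — Lagrangian $(-2)$-spheres in classes $\bar\Sigma$ with $\bar\Sigma^2=-2$, $\bar\Sigma\cdot K=0$ (such as $E_i-E_j$, $H-E_i-E_j-E_l$, $2H-E_1-\cdots-E_6$, $\ldots$) living in blow-ups of $\C\P^2$ for appropriately chosen symplectic forms. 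Rieser's Lagrangian blow-up \cite{Ri} at a point of a Lagrangian surface forms its connected sum with $\R\P^2$ and adds the new exceptional class to its $\Z_2$-homology class; in particular one blow-up turns the Clifford torus into a Lagrangian $N_3$ (Dyck's theorem) in a single exceptional class, and $r$ blow-ups on a Lagrangian $(-2)$-sphere or on the $\R\P^2$ produce a Lagrangian $N_r$ (resp. $N_{r+1}$) realizing $\bar\Sigma$ (resp. $H$) plus $r$ exceptional classes mod $2$, with $\chi=2-r$ (resp. $1-r$) — automatically matching Audin's congruence. Given these, one realizes an arbitrary nonzero $A$ with $\mathcal P(A)\equiv 1$ by a Lagrangian $\R\P^2$, then obtains the residues $\mathcal P\equiv 0,3,2$ either directly or by one, two, or three further Lagrangian blow-ups of such an $\R\P^2$ at points on it; the exceptional case of classes $E_i$ (a single exceptional, $\mathcal P\equiv 3$) is handled by the Clifford torus construction above. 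Two reductions make the case-checking finite: realizability of a class is unchanged by blowing up \emph{away from} the Lagrangian, and is invariant under the diffeomorphism group of $X$, which for a rational $4$-manifold realizes the full isometry group of the intersection form; since $b_2^-\ge 2$ yields a unimodular indefinite lattice of rank $\ge 3$, Eichler's transitivity theorem identifies any two nonzero $\Z_2$-classes of equal Pontrjagin square, so it suffices to treat one representative for each value in $\{0,1,2,3\}$.

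The hard part is this first step, in two respects. First, the combinatorics: to realize every class with $\mathcal P\equiv 1$ by a Lagrangian $\R\P^2$ one must show that, after adding one exceptional class, it becomes the mod-$2$ reduction of some Lagrangian $(-2)$-sphere class $dH-\sum m_iE_i$, which requires a case analysis over the degree $d$ and, in each case, verifying that the hypotheses of \cite{LW} hold for a suitable $\omega$ (i.e. the class is not blocked by a symplectic sphere of positive area). Second, the low-rank manifolds $\C\P^2$, $\C\P^2\#\overline{\C\P^2}$ and $S^2\times S^2$, where no spare blow-up and no nontrivial diffeomorphism orbit is available; these must be checked by hand, the key points being the Clifford-torus construction for $E_1\in H_2(\C\P^2\#\overline{\C\P^2};\Z_2)$ and the identification $S^2\times S^2\#\overline{\C\P^2}\cong\C\P^2\#2\overline{\C\P^2}$ for $S^2\times S^2$.
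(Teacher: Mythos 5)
Your overall architecture---first realize the largest Euler number allowed by Audin's congruence, then drop $\chi$ by $4$ repeatedly using a nullhomologous immersed Lagrangian placed in a Darboux chart around a point of $L$---is exactly the paper's (its Proposition 3.1 plus Proposition 2.8), and your step (2) with a small Lagrangian torus meeting $L$ in two points is sound: the paper performs the same local trick with a deformed Whitney sphere and explicitly remarks that the torus is the more natural choice. The gaps are all in step (1).

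The most serious one is the reduction ``Eichler's transitivity theorem identifies any two nonzero $\Z_2$-classes of equal Pontrjagin square, so it suffices to treat one representative for each value in $\{0,1,2,3\}$.'' This is false: every isometry of the intersection form preserves the mod $2$ Wu (characteristic) class $w$, i.e.\ the unique class with $w\cdot x=x\cdot x$ for all $x$. In $\C\P^2\#2\overline{\C\P^2}$ one has $w=H+E_1+E_2$ with $\mathcal P(w)\equiv 3$, while $E_1$ also has $\mathcal P(E_1)\equiv 3$ but is not characteristic, so no diffeomorphism carries one to the other; your four-representative reduction collapses distinct orbits and cannot be repaired by Eichler/Wall. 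One really must run through all classes $aH+\sum b_iE_i$ up to permutation, which is what the paper does. Second, the part you yourself label ``the hard part'' is not actually carried out: the proof needs an explicit supply of Lagrangian $(-2)$-sphere classes whose mod $2$ reductions sweep out $E_1+\cdots+E_{4l+2}$ and $H+E_1+\cdots+E_{4l+3}$ for all $l$ (the paper exhibits $\overline{Z_t}=t\overline H-\overline{E_1}-\cdots-\overline{E_{2t+1}}-(t-1)\overline{E_{2t+2}}$, verifies $\overline{Z_t}^2=-2$ and $\overline{Z_t}\cdot K_X=0$, and constructs a smooth sphere representative from a configuration of lines so that the criterion of \cite{LW} applies); deferring this to ``a case analysis over the degree $d$'' leaves the existence half of the theorem unproved. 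Finally, for $S^2\times S^2$ the identification $S^2\times S^2\#\overline{\C\P^2}\cong\C\P^2\#2\overline{\C\P^2}$ does not by itself yield a Lagrangian Klein bottle in the fiber class of $S^2\times S^2$, since one cannot blow back down through the Lagrangian; the paper instead embeds the real locus of $\C\P^2\#\overline{\C\P^2}$ into a symplectic fiber sum of two copies of $\C\P^2\#\overline{\C\P^2}$, and some such construction is missing from your argument.
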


\begin{rem} 
% As a consequence we also determine  what  the  minimal genus  of non-orientable Lagrangian surface is.
%In this theorem, a ``non-orientable" Lagrangian surface of genus zero means a Lagrangian sphere. So a mod 2 class $A$ is represented by a Lagrangian sphere (for some symplectic structure) if and only if  ${\mathcal P} (A)\equiv 2$ (mod 4).
%There are no orientable Lagrangian surfaces of positive genus except null-homologous Lagrangian tori. 
If we  denote $|\mathcal P(A)|$ the normalized $\mathcal P(A)$ taking values in $\{-2,-1, 0, 1\}$, then  the minimal genus of embedded non-orientable Lagrangian surfaces in a class $A$ is $$2-|\mathcal P(A) | \in \{1, 2, 3, 4\}.$$ 

\end{rem}

Notice that  for Lagrangian immersions,  Proposition \ref{general existence}  holds for {\bf any symplectic structure}.
The next step is to fix the symplectic form, or equivalently,  classify for which symplectic forms there exist an embedded Lagrangian surface representing $A$.     
A distinct feature is that, unlike for a Lagrangian sphere which only exists for a codimension one locus of the symplectic cone due to the null symplectic area condition, 
this seems to be  an open condition.
We will deal with this problem in a future work.

The structure of this paper is as following. In Section 2, we introduce several general approaches in constructing Lagrangian submanifolds and use them to prove Proposition \ref{general existence}.
In Section 3, we construct embedded Lagrangian surfaces with desired genus and prove Theorem \ref{main}. 

\ms
\ni {\bf Acknowledgement}. The third named author would like to thank  Banghe Li for   useful discussions on Proposition \ref{general existence}. The research of first named author is partially supported by NSFC 11771232 and 11431001. The research of second named author is partially supported by MOST 105-2115-M-017-005-MY2. The research of third named author is partially supported by NSF. 

\section{Constructing non-orientable Lagrangian surfaces}

\subsection{Existence of immersed Lagrangian surfaces}

In this subsection we establish  the existence of immersed Lagrangian surfaces in an arbitrary symplectic 4-manifold.%, namely Proposition \ref{general existence}. 

%\subsubsection{Immersed Lagrangian surfaces  via h-principle}

Let us recall Gromov and Lee's h-principle (\cite{Gr}, \cite{Lee}). 
Let $L$ be a closed $n$-manifold and $(W, \omega)$ be a symplectic $2n$-manifold. 

A smooth map $f:L\to (W, \omega)$ is called an {\it almost (or formal) Lagrangian immersion\/}
if  the following two conditions are satisfied:

 (1) $f^*[\omega]=0$ in $H^2(L;\mathbb R)$.

(2)  there is an injective bundle map  $F: TL\to f^* TW$ over $L$
such that $F(T_pL)\subset (f^*TW|_p,  f^*\omega |_p)$ is a Lagrangian subspace for any $p\in L$. 

\begin{thm}\label{h-principle}  (Gromov, Lee) Every almost Lagrangian immersion is homotopic through almost Lagrangian immersions to a Lagrangian immersion.
\end{thm}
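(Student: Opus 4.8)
The plan is to recognize Theorem \ref{h-principle} as an instance of Gromov's h-principle for the differential relation cut out by the Lagrangian condition, and to isolate the one feature of that relation which makes the proof nontrivial. First I would encode the data in the $1$-jet bundle $J^1(L,W)$: a point over $p\in L,\ q\in W$ is a linear map $A:T_pL\to T_qW$, and I let $\mathcal R\subset J^1(L,W)$ be the subset of those $A$ that are injective with $A^*\omega_q=0$, i.e. whose image is a Lagrangian subspace of $(T_qW,\omega_q)$. A genuine Lagrangian immersion is exactly a holonomic section $j^1f$ landing in $\mathcal R$, while the data $(f,F)$ of an almost Lagrangian immersion is precisely a (generally non-holonomic) section of $\mathcal R$ covering $f$: the bundle map $F:TL\to f^*TW$ is a section of $\mathrm{Hom}(TL,f^*TW)$, and condition (2) says it lands in $\mathcal R$. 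Condition (1), $f^*[\omega]=0$, is the cohomological normalization recording that this formal solution carries zero symplectic area; it is necessary, since any Lagrangian immersion has $f^*\omega=0$ and hence $f^*[\omega]=0$. In this language the theorem is the basic existence form of the h-principle: every section into $\mathcal R$ is homotopic, through sections into $\mathcal R$, to a holonomic one.

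The core step is to establish this flexibility by convex integration, building the immersion one principal coordinate direction at a time. Suppose in a chart I have already produced $\partial_1f,\dots,\partial_{i-1}f$ spanning an isotropic subspace $V_{i-1}=\langle\partial_1f,\dots,\partial_{i-1}f\rangle$. The isotropy relations constraining the next derivative $\partial_if$ are $\omega(\partial_jf,\partial_if)=0$ for $j<i$, together with linear independence, so $\partial_if$ must lie in the symplectic orthogonal $V_{i-1}^{\perp_\omega}$ (of dimension $2n-(i-1)$) minus the isotropic subspace $V_{i-1}$ itself. Since $V_{i-1}\subset V_{i-1}^{\perp_\omega}$ has real codimension $2(n-i+1)\ge 2$ there, the admissible set is connected and its convex hull is all of the coisotropic subspace $V_{i-1}^{\perp_\omega}$. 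This is the ampleness, relative to the symplectically defined subspaces, that a convex integration scheme needs: the formal value of $\partial_if$ may be replaced by a rapidly oscillating holonomic one with the same average without leaving $\mathcal R$. Iterating over the directions and over a handle decomposition of $L$ upgrades the local statement to a global Lagrangian immersion that is $C^0$-close to $f$ and formally homotopic to $(f,F)$.

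The hypothesis (1) is what guarantees the formal homotopy can be carried through area-zero data: the flexible construction is isotropic only up to an exact error, producing an immersion $g$ with $g^*\omega$ exact, and the vanishing of $f^*[\omega]$ (preserved along the formal homotopy) is exactly what lets this primitive be absorbed so that one lands on $g^*\omega\equiv 0$. An alternative and conceptually cleaner route, available in the exact local models, is to pass to the contactization $W\times\R$ with contact form $dz-\lambda$ where $d\lambda=\omega$; Legendrian immersions into this contact manifold project to exact Lagrangian immersions, and the Legendrian problem is handled directly by the holonomic approximation theorem, with condition (1) governing the descent back to $W$.

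The main obstacle, and the genuine content of the Gromov--Lee theorem, is precisely the thinness of $\mathcal R$. Unlike a generic immersion relation, it is neither open nor ample in the full fiber $\mathrm{Hom}(T_pL,T_qW)$: isotropy pins each new tangent vector into the \emph{proper}, and moreover derivative-dependent, symplectic orthogonal subspace $V_{i-1}^{\perp_\omega}$, so naive ample convex integration does not apply and one must exploit ampleness within these subspaces together with the controlled topology of the Lagrangian Grassmannian $\Lambda(n)=U(n)/O(n)$, which imposes no further obstruction to the deformation. The delicate bookkeeping on which everything turns is to run the oscillation correcting $\partial_if$ while preserving \emph{all} previously achieved relations $\omega(\partial_jf,\partial_kf)=0$ simultaneously; this is exactly where the coisotropic structure of $V_{i-1}^{\perp_\omega}$ and the cohomological input (1) are used essentially.
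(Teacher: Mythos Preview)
The paper does not give its own proof of Theorem~\ref{h-principle}: it is stated as a black-box result attributed to Gromov~\cite{Gr} and Lee~\cite{Lee}, and the paper's only use of it is as an input to Proposition~\ref{h principle w2 version}. So there is no in-paper argument to compare your proposal against.

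That said, your outline is a reasonable sketch of the standard argument, and it is worth noting that of the two routes you describe, the contactization route is the one that actually works cleanly and is the one found in the literature (e.g.\ Eliashberg--Mishachev). Your direct convex-integration paragraph is a bit optimistic: the Lagrangian relation is \emph{not} ample in the ordinary sense, precisely because, as you observe, the admissible set for $\partial_i f$ lies in the proper affine subspace $V_{i-1}^{\perp_\omega}$ rather than in the whole fiber, and convex integration as usually formulated requires the convex hull to be the full principal subspace. The way this is actually handled is the second route you mention: lift to the contactization $(W\times\R,\, dz-\lambda)$ (or more globally to a prequantization when $[\omega]$ is integral, patching otherwise), apply the h-principle for \emph{Legendrian} immersions into contact manifolds---which \emph{is} an open, microflexible, $\mathrm{Diff}$-invariant relation and hence satisfies the h-principle on open manifolds---and then project back. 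The hypothesis $f^*[\omega]=0$ is exactly what is needed to globalize the lift, since a Lagrangian immersion admits a Legendrian lift iff $f^*\lambda$ has a global primitive. Your description of condition~(1) as ``absorbing an exact error'' after the fact is not quite how the argument is organized; rather, the vanishing of $f^*[\omega]$ is used up front to set up the Legendrian problem, after which no further cohomological correction is needed.
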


To apply this h-principle, here is a useful observation. If we take an $\omega$-compatible almost complex structure on $W$, then symplectic vector bundles and complex vector bundles are closely related and Lagrangian subbundles correspond to real subbundles. 
Hence condition (2) can be replaced by an isomorphism as complex vector bundles
\begin{equation} \label{bundle condition}
TL\otimes \mathbb C \cong f^* TW
\end{equation}
In our situation, we have
\begin{lem}\label{classify bundle}
Rank two complex vector bundles over a non-orientable surface are classified by $w_2$. 

\end{lem}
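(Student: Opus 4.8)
The plan is to classify rank two complex vector bundles $E$ over a closed non-orientable surface $N_k$ using obstruction theory with respect to a CW structure with a single $2$-cell. Give $N_k$ the standard CW decomposition: one $0$-cell, $k$ one-cells, and one $2$-cell attached by the word $a_1^2 a_2^2 \cdots a_k^2$. We build an isomorphism between two rank two complex bundles $E$ and $E'$ cell by cell, or equivalently classify the homotopy classes of maps $N_k \to BU(2)$. The relevant obstructions to trivializing (or to extending an isomorphism) over the $i$-skeleton lie in $H^i(N_k; \pi_{i-1}(U(2)))$.

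First I would observe that over the $1$-skeleton, which is a wedge of circles, every complex vector bundle is trivial since $\pi_0(U(2)) = 0$; the choice of trivialization over the $1$-skeleton is governed by $\pi_1(U(2)) \cong \Z$, but since we only want to classify bundles (not framings), this freedom is absorbed. The primary and only remaining obstruction to a bundle being trivial over the $2$-skeleton — i.e., over all of $N_k$ — lies in $H^2(N_k; \pi_1(U(2))) \cong H^2(N_k; \Z)$. Now $H^2(N_k;\Z) \cong \Z_2$ for $k \geq 1$, and I would identify this obstruction class with $w_2(E)$: the mod $2$ reduction map on $H^2$ is an isomorphism $\Z_2 \to \Z_2$ here, and the $\pi_1 U(2) = \pi_1 U(1) = \Z$ obstruction reduces mod $2$ to the second Stiefel–Whitney class (equivalently, $c_1 \bmod 2 = w_2$, and $c_1(E) \in H^2(N_k;\Z) \cong \Z_2$ is already the whole invariant). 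So two rank two complex bundles with the same $w_2$ become isomorphic over $N_k$.

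The main technical point — and the step I expect to require the most care — is ruling out higher-dimensional subtleties and handling the non-orientability correctly: since $H^2(N_k;\Z) \cong \Z_2$ rather than $\Z$, there is no integral $c_1$ to worry about beyond its mod $2$ class, and there are no $3$- or higher cells so no secondary obstructions arise. I would also need to note the low-$k$ cases: for $k=1$, $N_1 = \R\P^2$ and $H^2(\R\P^2;\Z) \cong \Z_2$, so the same argument applies; one should double-check that the relevant obstruction group is computed with the correct (possibly twisted) coefficients, but since $U(2)$ is connected and $\pi_1 U(2)$ carries the trivial $\pi_1(N_k)$-action (it is abelian and the action on $\pi_1$ of the fiber of $BU(2)$ is trivial because $BU(2)$ is simply connected), untwisted coefficients are correct. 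Finally, I would record that $w_2$ does realize both values: the trivial bundle has $w_2 = 0$, and, e.g., a bundle pulled back from a generator of $H^2$ or $TN_k \oplus \underline{\C}$-type constructions realizes the nonzero class, so the classification map $\{$rank $2$ complex bundles$\}/\!\cong \,\to H^2(N_k;\Z_2)$ is a bijection.
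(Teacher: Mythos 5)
Your proof is correct, but it takes a genuinely different route from the paper's. You classify bundles homotopy-theoretically: since $N_k$ is a $2$-complex and $BU(2)$ is simply connected with $\pi_2(BU(2))\cong\pi_1(U(2))\cong\Z$, the set $[N_k,BU(2)]$ is computed by the single obstruction-theoretic invariant in $H^2(N_k;\Z)\cong\Z_2$, namely $c_1$, whose mod $2$ reduction is $w_2$; your remarks on untwisted coefficients (because $BU(2)$ is simply connected) and on realizing both values are exactly the points that need checking. The paper instead argues geometrically: a rank two complex bundle $E$ over a surface admits a nowhere-zero section for dimensional reasons, so $E\cong\underline{\C}\oplus\xi$ splits off a trivial complex line bundle, reducing the problem to the classification of complex line bundles by $c_1\in H^2(\Sigma;\Z)\cong\Z_2$, and then invoking that mod $2$ reduction $H^2(\Sigma;\Z)\to H^2(\Sigma;\Z_2)$ is an isomorphism. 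Both arguments ultimately rest on the same computation $H^2(N_k;\Z)\cong\Z_2$; the paper's splitting argument is more elementary and self-contained, while your Postnikov/obstruction-theory argument is more systematic and makes transparent why no further invariants can exist over a $2$-complex.
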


\begin{proof}
Let $\Sigma$ be a non-orientable surface and $E$  a rank two complex vector bundle over $\Sigma$. 
In particular, there is an almost complex structure $J$ on $E$.
For dimensional reason, there is a nowhere zero section $\tau\in \Gamma(E)$.
$\tau\oplus J\tau$ forms a trivial complex line bundle and induces a splitting $\mathbb C \oplus \xi$. 
The complex line bundle $\xi$ is classified by $c_1\in H^2(\Sigma; \mathbb Z)=\mathbb Z_2$. 
Notice that $c_1\equiv w_2$ under the mod 2 reduction homomorphism
$H^2(\Sigma; \mathbb Z)\to H^2(\Sigma; \mathbb Z_2)$, which is an isomorphism in this case.
\end{proof}

%\begin{thm}
%Let $L$ be a closed manifold and $(W, \omega)$ be a symplectic manifold. Then there is a weak homotopy equivalence between the two spaces:
%\begin{itemize}
%\item The space of Lagrangian immersions $L\to (W, \omega)$.
%\item The space of all bundle maps $\phi: TL\to TW$ such that the induced map on the bases $\psi:L\to W$ pulls back the cohomology class $[\omega]$ to the zero class in $H^2(L;\mathbb R)$ and the induced map on the fibres sends any fibre of $TL$ to a Lagrangian subspace. 
%\end{itemize}
%\end{thm}

\begin{prop} \label{h principle w2 version}
Let $(X, \omega)$ be a symplectic 4-manifold and $A$ a mod 2 homology class. Suppose 
$\Sigma$ is a non-orientable surface and $f:\Sigma\to X$ is a smooth map such
that $f_*([\Sigma])=A$ and $\chi(\Sigma)\equiv \langle w_2(X), A\rangle   \pmod 2$. Then there is a Lagrangian immersion from $\Sigma$ to $(X, \omega)$ which is homotopic to $f$. 
\end{prop}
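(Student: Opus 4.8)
The plan is to realize $f$ (without modifying it) as the underlying map of an \emph{almost Lagrangian immersion} and then invoke the Gromov--Lee h-principle, Theorem~\ref{h-principle}. Concretely, I would check conditions (1) and (2) in the definition of an almost Lagrangian immersion for a pair $(f,F)$, where the injective bundle map $F$ remains to be constructed.

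Condition (1) is vacuous: since $\Sigma$ is a closed \emph{non-orientable} surface, $H^2(\Sigma;\R)=0$, so $f^*[\omega]=0$ automatically. The content is therefore condition (2). Fix an $\omega$-compatible almost complex structure on $X$; by the observation recorded after Theorem~\ref{h-principle}, condition (2) is equivalent to exhibiting an isomorphism of rank two complex vector bundles $T\Sigma\otimes\C\cong f^*TX$ over $\Sigma$. (Given such an isomorphism, the image of the real subbundle $T\Sigma\subset T\Sigma\otimes\C$ furnishes the bundle map $F$, whose fibres are Lagrangian precisely because the almost complex structure is $\omega$-compatible.) By Lemma~\ref{classify bundle}, rank two complex bundles over $\Sigma$ are classified by $w_2\in H^2(\Sigma;\Z_2)\cong\Z_2$, so it suffices to verify $w_2(T\Sigma\otimes\C)=w_2(f^*TX)$.

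The heart of the matter is this equality of Stiefel--Whitney classes. Viewing the complexification $T\Sigma\otimes\C$ as the real bundle $T\Sigma\oplus T\Sigma$ gives $w(T\Sigma\otimes\C)=w(T\Sigma)^2$, and since $H^{3}(\Sigma;\Z_2)=H^{4}(\Sigma;\Z_2)=0$ only the degree-two term survives, yielding $w_2(T\Sigma\otimes\C)=w_1(\Sigma)^2$; by Wu's formula $w_1(\Sigma)^2=w_2(\Sigma)$, and $\langle w_2(\Sigma),[\Sigma]\rangle\equiv\chi(\Sigma)\pmod 2$. On the other side, $w_2(f^*TX)=f^*w_2(X)$, and since the mod $2$ evaluation pairing on the closed surface $\Sigma$ is perfect, this class is determined by $\langle f^*w_2(X),[\Sigma]\rangle=\langle w_2(X),f_*[\Sigma]\rangle=\langle w_2(X),A\rangle$. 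Thus the hypothesis $\chi(\Sigma)\equiv\langle w_2(X),A\rangle\pmod 2$ says exactly that $w_2(T\Sigma\otimes\C)$ and $w_2(f^*TX)$ coincide, so the two bundles are isomorphic and condition (2) holds.

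With (1) and (2) in hand, $(f,F)$ is an almost Lagrangian immersion, and Theorem~\ref{h-principle} then provides a homotopy, through almost Lagrangian immersions, from $(f,F)$ to a genuine Lagrangian immersion; in particular the underlying map of that Lagrangian immersion is homotopic to $f$, which is the assertion. I expect the only non-formal step to be the characteristic-class computation of the previous paragraph, and within it the identity $\langle w_1(\Sigma)^2,[\Sigma]\rangle\equiv\chi(\Sigma)\pmod 2$; everything else is either empty (condition (1)) or a direct appeal to Theorem~\ref{h-principle} and Lemma~\ref{classify bundle}.
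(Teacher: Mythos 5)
Your proposal is correct and follows essentially the same route as the paper: verify condition (1) trivially via $H^2(\Sigma;\R)=0$, reduce condition (2) to the bundle isomorphism $T\Sigma\otimes\C\cong f^*TX$, match $w_2$ on both sides using Wu's formula and the identification $\langle w_1(\Sigma)^2,[\Sigma]\rangle\equiv\chi(\Sigma)\pmod 2$, and conclude by Lemma~\ref{classify bundle} and Theorem~\ref{h-principle}. The only cosmetic difference is that you invoke perfectness of the mod $2$ pairing where the paper simply uses $H^2(\Sigma;\Z_2)\cong\Z_2$; the content is identical.
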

\begin{proof}
%It is a standard fact that $f$ is homotopic to an immersion so we assume that  $f:\Sigma\to W$ is an immersion.
By Theorem \ref{h-principle}  it suffices to show that $f$ is an  almost Lagrangian immersion. 
%\subsubsection{Non-orientable surfaces}
Since $\Sigma$ is a non-orientable surface,  $H^2(\Sigma;\mathbb R)=0$ and hence $f^*[\omega]$ is automatically zero.

Let us now analyze the bundle $f^*TX$ as a real vector bundle by calculating the Stiefel-Whitney classes. 
Firstly, $w_1(f^*TX)=f^* w_1(TX)=0$ since $X$ is orientable. 
Since  $\Sigma$ is  non-orientable, we have the pairing $\langle w_2(f^*TX), [\Sigma]\rangle$, and 
$$\langle w_2(f^*TX), [\Sigma]\rangle= \langle w_2(TX), A\rangle.  $$
For the bundle  $T\Sigma\otimes \mathbb C$, as a real bundle, 
$$T\Sigma\otimes \mathbb C=T\Sigma\oplus T\Sigma$$
has  $w_1=0$ and, by Wu's formula (cf. Theorem 11.14 in \cite{MS74}),
$$w_2=2w_2(T\Sigma)+w_1(T\Sigma)\cdot w_1(T\Sigma)=w_1(T\Sigma)\cdot w_1(T\Sigma)=w_2(T\Sigma).$$
Since $w_2(T\Sigma)$ is the mod $2$ reduction of the Euler class of $T\Sigma\to \Sigma$ we have 
 $$\langle w_2(T\Sigma\otimes \mathbb C), [\Sigma]\rangle \equiv \chi(\Sigma) \pmod 2.  $$
%$w_2(T\Sigma\otimes \mathbb C)$ is trivial if $\chi(\Sigma)$ is even, and non-trivial if $\chi(\Sigma)$ is odd. 
It follows from Lemma \ref{classify bundle} that  \eqref{bundle condition} holds if and only if 
$$\chi(\Sigma)\equiv \langle w_2(TX), A\rangle \pmod 2 .$$
\end{proof}

\begin{rem}
Since  $\langle w_2(W), A\rangle \equiv A\cdot A \pmod 2$, 
the condition
$\chi(\Sigma)\equiv \langle w_2(TX), A\rangle \pmod 2$
is equivalent to   
$\chi(\Sigma)\equiv A\cdot A \pmod 2$.  
\end{rem}

%\subsubsection{Orientable surfaces}

%\begin{rem}If $L$ is an orientable surface, then 
%$TL\otimes \mathbb C$ is stably trivial and hence trivial as a complex bundle (\cite{ALP}, page ). 
%On the other hand, we have the pairing $\langle c_1(f^*TW, \omega), [L]\rangle$, and 
%$$\langle c_1(f^*TW, \omega), [L]\rangle=\langle c_1(W, \omega), f_*([L])\rangle$$
%Notice that, by dimensional reason,  
%$f^*TW=\mathbb C\oplus  \xi$
%for some complex line bundle $\xi$. 
%\end{rem}

%The following is well-known (eg. \cite{ALP}): If  $A$ is an integral  class with $\omega(A)=0$ and $c_1(X, \omega)(A)=0$
%then there is a Lagrangian immersion from $\Sigma\to (X, \omega)$ from any orientable surface $\Sigma$. 

%\subsection{Some local Lagrangian constructions}

%In this subsection we review Lagrangian constructions in $T^*\R^n$.

%Clearly the Clifford torus is Lagrangian. 

\subsection{Existence of embedded non-orientable Lagrangian surfaces}

In this subsection, we review the Lagrangian surgery and Givental's beautiful local constructions. 
Then we use these tools to construct new embedded Lagrangian surfaces from old or immersed ones in a given symplectic 4-manifold.
%, which will be used to increase the genus of Lagrangian surfaces.

\subsubsection{Lagrangian surgery}(\cite{P})
%\cite{P}
A Lagrangian surgery is a desingularization of a transversal Lagrangian intersection point.
Let $(\R^{2n},\omega_0)$ be the standard symplectic vector space and $l_1, l_2$ Lagrangian subspaces of $\R^{2n}$ which intersect transversally at the origin. %$\{p_1=\cdots =p_n=0\}, \{q_1=\cdots =q_n=0\}$.
Let $J:\R^{2n}\rightarrow \R^{2n}$ be an $\omega_0$-compatible almost complex structure with $l_2=J(l_1)$. 
Let $W=\{\xi\in l_1|\omega_0(\xi, J\xi)=1\}\cong S^{n-1}$. 
%Define an increasing smooth function $\beta:\R\rightarrow [0,1]$ such that $\beta(t)=0$ when $t<<0$ and $\beta(t)=1$ when $t>>0$. 
Define a map $F:W\times \R\rightarrow \R^{2n}, (\xi, t)\mapsto e^{-t} \xi+ e^t J\xi$. Then $F$ is a Lagrangian embedding, and $F$ is asymptotic to $l_1$ as $t\to -\infty$, and asymptotic to $l_2$ as $t\to +\infty$. One can smooth $F$  outside a large ball to obtain a Lagrangian embedding $F':W\times \R\rightarrow \R^{2n}$ such that $F'(W\times (-\infty, -c]) \subset l_1$ and $F'(W\times [c,+\infty))\subset l_2$ for some $c>0$. The image of $F'$, $\Gamma(l_1,l_2)$, is called a Lagrangian handle joining $l_1$ and $l_2$. 
%Im$F\subset l_2$ for $t<<0$ and Im$F\subset l_1$ for $t>>0$. 
%It is easy to show that Im$F$ is Lagrangian in $\R^{2n}$.
%Replacing $l_1,l_2$ by $F$ near the origin, we can construct a new Lagrangian submanifold $l$ of $\R^{2n}$. 
%Roughly speaking, the Lagrangian surgery is given by replacing the union of positive $p_i-$axis and $q_i-$axis with a smooth curve on the $p_iq_i-$plane which coincide with them away from origin, and extending radically to any direction.

%But it may depend on the order of $l_1, l_2$.  
%If we consider the sign of the constructions, the situations differ according to the value n.
%If $n$ is odd and we switch the role of $l_1,l_2$ in the construction, the sign of the Lagrangian handle will change as well.
%In this case, we can construct both orientation-preserving and orientation-reversing Lagrangian handles.
In general, if $x$ is a transversal intersection point of two Lagrangian submanifolds $L_1, L_2$ or a transversal self intersection point of a Lagrangian submanifold $L$, by Weinstein neighborhood theorem, we can choose a neighborhood $U$ of $x$ such that $(L_1\cup L_2)\cap U$ or $L\cap U$ is the union of two Lagrangian disks. The Lagrangian surgery is cutting out $U$ and gluing back a portion of the Lagrangian handle carefully to construct a new Lagrangian submanifold. 
%If $n$ is even, it also does not depend on the order of $l_1, l_2$. Hence the Lagrangian surgery of $l_1, l_2$ is unique up to isotopy.% given by the relation
%$$\sigma (L_1,L_2)ind(x)=(-1)^{\frac{n(n-1)}{2}+1}.$$ 

Notice that the Lagrangian isotopy class of a Lagrangian handle is independent of the choice of almost complex structures and smoothing. 
It turns out that the topological type of the resulting manifold is also completely determined. 
It will be much easier to describe it if we introduce the signs or orientations for the manifolds and the intersections. 
Assume the orientations of $l_1,l_2$ are given. 
The Lagrangian handle $\Gamma(l_1,l_2)$ is called positive or has sign 1 if the orientations of $l_1, l_2$ coincide in the image of $F'$. Otherwise, it is called negative or has sign $-1$. % or $\sigma(L_1,L_2)=-1$.
There is a natural orientation for the positive Lagrangian handle induced by the orientation of $l_1$ and $l_2$.
%It is easy to show that a Lagrangian handle is orientation-preserving if $J:l_1 \to l_2$ reverses the orientations and orientation-reversing if $J$ preserves the orientations.

We know that
\begin{prop}\label{Polterovich}(\cite{P})
\begin{enumerate}
\item The sign of the Lagrangian handle $\Gamma (l_1, l_2)$ is $(-1)^{\frac{n(n-1)}{2}+1} \ind(l_1,l_2)$.
\item Let $P^n=S^{n-1}\times S^1$ and $Q^n=S^{n-1}\times [-1,1]/\sim$, where $(x, 1)\sim (\tau(x), -1)$, and $\tau:S^{n-1}\rightarrow S^{n-1}$ is an orientation reversing involution. Suppose $L$ is a connected immersed Lagrangian manifold with a self intersection point $x$ and $N$ is the resulting manifold of a Lagrangian surgery at $x$.
If $L$ is orientable, then $N\cong L\# P^n$ when the surgery is positive and $N\cong L\# Q^n$ when the surgery is negative.
If $L$ is non-orientable, then $N\cong L\# P^n\cong L\# Q^n$. 
\item 
\begin{enumerate}
\item $L, N$ are homologous (mod 2).
\item If $L$ is oriented and the surgery is positive, then $L, N$ are homologous. 
\end{enumerate}
\end{enumerate}
\end{prop}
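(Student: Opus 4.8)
The plan is to work throughout with the explicit handle $F(\xi,t)=e^{-t}\xi+e^{t}J\xi$, $\xi\in W=S^{n-1}\subset l_1$, and to treat the three parts in order.

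For (1), I would fix an orientation of $l_1$ and write it, using the outward radial unit field $\xi$ along $W$, in the form ``$\xi$ first, then the induced orientation of $W$''. I give the handle $W\times\R$ the product orientation of $W$ with $\partial_t$. Differentiating $F$: as $t\to-\infty$ the image tends to $l_1$ and $\partial_tF\to-e^{-t}\xi$ points radially \emph{inward}, so the handle orientation at that end equals the orientation of $l_1$ up to an explicit sign (the transpositions needed to move $\partial_t$ past the $W$-directions and flip it); as $t\to+\infty$ the image tends to $l_2=Jl_1$ and the handle orientation there is $J_*$ of the $l_1$-orientation, again up to an explicit transposition sign, reordering the basis $J\xi,Je_1,\dots,Je_{n-1}$ against $\xi,e_1,\dots,e_{n-1},J\xi,\dots$. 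The handle has sign $+1$ exactly when these two induced orientations are the \emph{given} ones on $l_1,l_2$; comparing with the definition of $\ind(l_1,l_2)$ (the sign relating the concatenated orientation of $l_1\oplus l_2$ to the symplectic orientation of $V$), the accumulated transposition count contributes the factor $(-1)^{n(n-1)/2+1}$. This step is pure permutation bookkeeping, and it is the one that has to be done most carefully, since a single misplaced convention flips the sign.

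For (2), note that the surgered manifold is $N=\big(L\setminus(D_1\sqcup D_2)\big)\cup_\partial H$, where $D_1,D_2$ are the two Lagrangian disks through $x$ with $T_xD_i=l_i$ and $H\cong S^{n-1}\times[-1,1]$ is a truncation of the handle glued along $\partial D_1\sqcup\partial D_2$. Since $L$ is connected, I would isotope so that $\partial D_1$ and $\partial D_2$ bound disjoint small disks inside a single coordinate ball of $L$; then the construction is $L\mapsto L\# M$, with $M$ the closed $S^{n-1}$-bundle over $S^1$ obtained by capping the complement of those disks with $H$. The monodromy of this bundle is trivial or not according to whether the two attaching maps are coherent with a chosen orientation running along $H$, which by part (1) is exactly the sign of the surgery; hence $M\cong P^n$ for a positive surgery and $M\cong Q^n$ for a negative one, which gives the orientable case. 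When $L$ is non-orientable, I would additionally prove $L\#P^n\cong L\#Q^n$: an orientation-reversing loop in $L$ produces a self-diffeomorphism of $L$ minus a ball that is the identity near the boundary sphere but reverses its orientation, and conjugating one of the two attaching maps by this diffeomorphism converts an untwisted tube into a twisted one.

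For (3), both statements follow from the fact that $L$ and $N$ agree outside the Weinstein ball $U\cong B^{2n}$, while $L\cap U=D_1\cup D_2$ and $N\cap U=H$ are $n$-chains inside the contractible set $U$ with the \emph{same} boundary on $\partial U$. Thus their difference is a cycle in $U$, which bounds, so $[L]=[N]$ in $H_n(X;\Z_2)$; no orientations are needed, giving (3a). For (3b), when $L$ is oriented and the surgery is positive, parts (1)--(2) give $H$ an orientation restricting compatibly to $D_1$ and $D_2$, so the boundaries cancel over $\Z$ and the same chain argument yields $[L]=[N]$ in $H_n(X;\Z)$. The genuine obstacle is the sign computation in part (1); the ``twisted equals untwisted over a non-orientable base'' claim in part (2) is standard but still deserves a careful isotopy argument.
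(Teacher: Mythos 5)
The paper does not prove this proposition at all: it is quoted verbatim from Polterovich \cite{P}, so there is no internal argument to compare yours against. Your outline is essentially Polterovich's original proof, and its overall structure is sound: parts (2) and (3) as you describe them are complete in all essentials. For (2), the reduction to ``$L$ minus two disks in a single coordinate ball, capped by an $S^{n-1}$-bundle over $S^1$ whose monodromy is detected by the sign'' is the right argument, and your mechanism for $L\#P^n\cong L\#Q^n$ over a non-orientable $L$ (dragging one attaching region around an orientation-reversing loop) is the standard one. For (3), the observation that $L$ and $N$ differ by a chain supported in a contractible Weinstein ball, whose boundary terms cancel mod $2$ always and over $\Z$ exactly when the handle orientation is compatible with both disks, is correct and gives both (3a) and (3b).

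The one genuine gap is in part (1): the entire content of that statement is the specific value of the sign, $(-1)^{n(n-1)/2+1}$, and your proposal defers precisely that to ``permutation bookkeeping'' without performing it. Asserting that ``the accumulated transposition count contributes the factor $(-1)^{n(n-1)/2+1}$'' is restating the claim, not proving it; to close this you must actually write down an oriented basis $\xi,e_1,\dots,e_{n-1}$ of $l_1$, push it through $F$ to the two ends (noting that $\partial_tF$ limits to the \emph{inward} radial direction on the $l_1$ end but the \emph{outward} one on the $l_2$ end), fix one orientation of $W\times\R$, and compare the concatenated basis of $l_1\oplus Jl_1$ with the symplectic orientation of $(\R^{2n},\omega_0)$. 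Note also that the handle's sign is defined by whether a \emph{single} orientation of $\Gamma(l_1,l_2)$ induces the given orientations at \emph{both} ends simultaneously, not by matching each end separately; your phrasing should reflect that. This computation matters downstream, since Lemma 2.9 of the paper multiplies the sign from (1) against the sign $(-1)^{n(n-1)/2}$ from the wavefront index formula, and an error in either factor would corrupt the surgery bookkeeping in Section 2.2.3.
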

When $n=2$, the sign of a Lagrangian handle coincides with $\ind(l_1,l_2)$. 
$N$ is diffeomorphic to $L\# T^2$ if $L$ is orientable and the index of self intersection point is 1. 
Otherwise, $N$ is diffeomorphic to $L\# KB$, where $KB$ denotes a Klein bottle. 
%\begin{rem}
%In the discussion of topology of the result of manifold after 
%Although the orientation of Lagrangian manifolds, the sign of Lagrangian handles and the index of intersection points play huge roles in the discussion for manifolds given by Lagrangian surgery, %involves the orientation and the index of the Lagrangian manifolds, 
%this construction is actually independent of these data and the topology of the resulting manifolds are completely determined by the ambient symplectic structure.
%Nevertheless, these information put us in a better position in understanding and analyzing the effect of the surgery.  

%Moreover, when $n$ is even, the topological type of the resulting manifold is also independent of the order of the Lagrangian manifolds. 
%\end{rem}

\subsubsection{Wavefront construction}
%Let $\lambda_{can}$ be the canonical 1-form of $T^*\R^n$ and $\omega=-d\lambda_{can}$ the standard symplectic structure.
Let ${\bf x}=(x_1,\cdots ,x_n)$ be a coordinate system of $\R^n$ and ${\bf y}=(y_1,\cdots,y_n)$ the coordinates for fibers of $T^* \R^n$ corresponding to the basis $dx_1,\cdots, dx_n$.
Then $\lambda_{can}=\sum_iy_idx_i$ is the Liouville form on $T^* \R^n$, and 
$\omega_0=-d\lambda_{can}$ the standard symplectic structure.

A smooth section $f:L(\subset \R^n) \rightarrow (T^*\R^n,\omega_0)$ is Lagrangian if $f^*\omega_0=0$. So $f^*d\lambda_{can}=df^*\lambda_{can}=0$ and $f^*\lambda_{can}$ is closed. 
$f$ is called exact if $f^*\lambda_{can}=dh$ for some function $h$ on $L$. $h$ is called a generating function of $f$. The graph of $h$ in $L\times \R$ is called a wavefront of $f$.
Note that $h$ is unique up to a addition by a constant.
%is simply connected, $U\subset \R^n$ be a simply connected submanifold and $f: U\rightarrow T^*\R^n$ be a Lagrangian immersion. 

Conversely, let $h$ be a smooth function on $\R^n$. Consider the gradient function $\nabla h:\R^n\rightarrow \R^n$. The graph of $\nabla h$ is a Lagrangian section of $\R^{2n}\cong T^*\R^n$ with  a generating function $h$.

In general, if $L$ is a Lagrangian submanifold of a $2n$-dimensional symplectic manifold $(X, \omega)$ and $x\in L$, we can choose a neighborhood $U$ of $x$ and a local chart $\phi:U\rightarrow T^*\R^n$ carefully such that $\phi(L\cap U)$ is a Lagrangian section of $T^*\R^n$ with a generating function $h$.
If $f_1, f_2$ are two local sections with generating functions $h_1,h_2$ respectively, then $f_1, f_2$ coincide at $x\in \R^n$ if and only if $\nabla h_1(x)=\nabla h_2(x)$. 
Geometrically, it is equivalent to the condition that the wavefronts of $f_1,f_2$ are parallel at $x$. 
%(Red trajectories in the following figures.)
This approach is extremely useful for $n=2$ since we can explicitly draw the wavefronts, i.e. the graphs of $h_1, h_2$ in $\R^3$.

%Conversely, if $L_1$ is the graph of a smooth function $\lambda=(\lambda_1,\lambda_2) :\R^2\rightarrow \R^2$ such that $\lambda_1,y=\lambda_2,x$,  

\begin{defn}
Let $L_1, L_2$ be two Lagrangian sections of $T^*\R^n$ which are given by $f_k:\R^n\rightarrow T^*\R^n$, i.e. $L_k=f_k(\R^n)$, and $h_1, h_2$ be their generating functions. 
Assume further that $\nabla h_1(x)=\nabla h_2(x)$ and $p=(x,h_1(x))$.
% and $\det(\Hess(h_1-h_2))\neq 0$ 
%(i.e $L_1, L_2$ intersect at $p$). 
 We define $\sgn(p)=1$ if $\det(\Hess(h_2-h_1))>0$ and $\sgn(p)=-1$ if $\det(\Hess(h_2-h_1))<0$ at $p$.
\end{defn}

When $n=2$, $\sgn(p)$ can be visualized from the wavefronts easily.
We can assume further that $h_1(x)=h_2(x)$ by adding constants.
If $\det(\Hess(h_2-h_1))>0$, in a small neighborhood of $x$, one wavefront is completely in the top of the other one except $x$ (see Figure 1(a)).
When $\det(\Hess(h_2-h_1))<0$, these two wavefronts always intersect at some point near but not equal to $x$ (see Figure 1(b)). 

\centerline{
\includegraphics[width=1.0\linewidth]{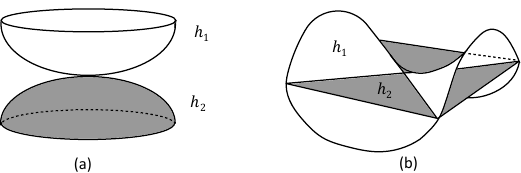}
}
{\centerline{Figure 1}}

\begin{defn}
Let $L_1, L_2$ be two oriented Lagrangian sections of $T^*\R^n$ which are given by $f_k:\R^n\rightarrow T^*\R^n$ and $h_1, h_2$ be their generating functions. 
Let $\R^n$ be oriented naturally by the ordered basis $\frac{\partial}{\partial x_1},\cdots ,\frac{\partial}{\partial x_n}$.
We define
\begin{enumerate}
\item 
$s(L_k)=1$ if $f_k$ is orientation-preserving, $s(L_k)=-1$ if $f_k$ is orientation-reversing. 
%Assume $L$ and $L'$ are oriented by the orientation of $\R^n$.
%It means that they have ordered basis $e_1,\cdots ,e_n$ and $e_1', \cdots ,e_n'$.
\item
$L_1, L_2$ have the same orientation or $s(L_1, L_2)=1$ if the map $f_2\circ f_1^{-1}:L_1\rightarrow L_2$ is orientation-preserving.
$L_1, L_2$ have opposite orientation or $s(L_1, L_2)=-1$ if $f_2\circ f_1^{-1}$ is orientation-reversing.
\end{enumerate}
It is clear that $s(L_1, L_2)=s(L_1)s(L_2)$.
\end{defn}

\begin{lem}\label{index}
Let $L_1, L_2$ be two oriented Lagrangian sections of $T^*\R^n$ intersecting transversally at $p=(x,y)$.
Let $h_1, h_2$ be their generating functions.
Let $\ind_p(L_1, L_2)$ denote the intersection index of $L_1, L_2$ at $p$.
Then 
 $$\ind_p(L_1, L_2)=(-1)^{\frac{n(n-1)}{2}}s(L_1, L_2)\, \sgn(p)$$
\end{lem}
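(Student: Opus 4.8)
The plan is to reduce the statement to a local computation at the intersection point $p$, where both Lagrangian sheets are graphs of gradients of generating functions $h_1, h_2$. Since the intersection index, the signs $s(L_1), s(L_2)$, and $\sgn(p)$ are all defined in terms of data at $p$ alone, I would first replace $L_1, L_2$ by their tangent planes at $p$: the intersection index $\ind_p(L_1, L_2)$ depends only on the oriented tangent spaces $T_pL_1, T_pL_2$ inside $T_p(T^*\R^n)$ together with the symplectic/fiber orientations, and linearizing the generating functions at $x$ (i.e. replacing $h_k$ by its quadratic Taylor polynomial) changes neither $T_pL_k$, nor the orientation behavior of $f_k$, nor the sign of $\det(\Hess(h_2-h_1))$. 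So without loss of generality $h_k$ is a quadratic form $\tfrac12 x^T Q_k x$ and $L_k$ is the linear Lagrangian subspace $\{(x, Q_k x)\}\subset \R^n\oplus \R^n = T^*\R^n$, with $Q_1 - Q_2$ nonsingular (transversality).

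Next I would compute each of the three quantities explicitly for these linear subspaces. The intersection index $\ind_p(L_1, L_2)$ is the sign of the determinant of the change-of-basis matrix from the concatenated oriented bases of $T_pL_1$ and $T_pL_2$ to the ambient orientation of $\R^{2n}$; with $L_k$ parametrized by the columns of $\binom{I}{Q_k}$ this determinant is, up to the fixed reordering sign relating the ordered basis $(\partial_{x_1},\dots,\partial_{x_n},\partial_{y_1},\dots,\partial_{y_n})$ to the symplectic orientation, equal to $\pm\det\binom{I\;\;I}{Q_1\;\;Q_2} = \pm\det(Q_2 - Q_1)$. The block-row reordering that brings the $x$-block of the second sheet past the $y$-block of the first contributes exactly the factor $(-1)^{n^2} = (-1)^n$ or, after combining with the standard symplectic-vs-coordinate orientation discrepancy, the factor $(-1)^{n(n-1)/2}$ claimed in the lemma — this bookkeeping is where I expect to have to be careful. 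Meanwhile $\sgn(p) = \sgn\det(Q_2 - Q_1)$ by definition, and $s(L_1, L_2) = s(L_1)s(L_2)$ where $s(L_k) = \sgn\det(df_k) = \sgn\det(I + \text{stuff})$; for the \emph{linear} model $f_k(x) = (x, Q_k x)$ the Jacobian in the $x$-coordinates is $I$, so $s(L_k)$ is computed from how the chosen orientation of $L_k$ compares to the $\R^n$-orientation pulled back along the base projection. After substituting all three, the identity $\ind_p(L_1,L_2) = (-1)^{n(n-1)/2} s(L_1,L_2)\,\sgn(p)$ becomes an equality of signs of determinants, with the orientation factors on $L_1$ and $L_2$ cancelling in pairs against the parametrizations.

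The main obstacle is purely the orientation/sign bookkeeping: tracking the fixed factor relating the ambient orientation used to define $\ind_p$ (namely the symplectic orientation $\omega_0^n$, or $d y_1\wedge dx_1 \wedge \cdots$) to the product orientation of the ordered basis $\partial_{x_1},\dots,\partial_{x_n},\partial_{y_1},\dots,\partial_{y_n}$, and interleaving this with the column-permutation sign coming from writing the two sheets side by side as $\binom{I\;I}{Q_1\;Q_2}$ versus the block-anti-diagonal form. Both permutations contribute powers of $(-1)$ that depend on $n$, and the claim is that they combine to exactly $(-1)^{n(n-1)/2}$; I would verify this by a direct count of transpositions (it suffices to check the parity, and one can cross-check against the cases $n=1$ and $n=2$, the latter matching the statement in the surgery discussion that for $n=2$ the handle sign equals $\ind(l_1, l_2)$). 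Everything else is linear algebra: once the model is linear and quadratic, no analysis remains.
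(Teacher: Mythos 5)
Your proposal is correct and follows essentially the same route as the paper: both reduce to the tangent spaces at $p$ written as graphs with bases $\bigl(\begin{smallmatrix} I \\ \Hess(h_k)\end{smallmatrix}\bigr)$, identify the determinant of the concatenated basis matrix with $\det(\Hess(h_2-h_1))$, and attribute the factor $(-1)^{n(n-1)/2}$ to the permutation relating the ordering $(\partial_{x_1},\dots,\partial_{x_n},\partial_{y_1},\dots,\partial_{y_n})$ to the standard ordering $(\partial_{x_1},\partial_{y_1},\dots,\partial_{x_n},\partial_{y_n})$, with $s(L_1)s(L_2)$ absorbing the choice of orientations of the parametrizations. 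The only difference is cosmetic (you phrase the linearization explicitly and hedge on the transposition count, which does come out to $(n-1)+(n-2)+\cdots+1=n(n-1)/2$ as the paper asserts).
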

\begin{proof}
The tangent plane $T_{(x,f_k(x))}L_k$ has a basis $e^k_i=\frac{\partial}{\partial x_i}+\sum_j\frac{\partial^2h_k}{\partial x_i\partial x_j}(x) \frac{\partial}{\partial y_j},$ $1\leq i\leq n$.

Since $L_1, L_2$ intersect transversally at $p=(x,y)$, then $y=f_1(x)=\nabla h_1(x)=\nabla h_2(x)=f_2(x)$ and $e_1^1,\cdots ,e_n^1, e_1^2,\cdots , e_n^2$ form a basis of $T_p(T^*\R^n)$.

The coefficient matrix of the ordered basis $e_1^1,\cdots ,e_n^1, e_1^2,\cdots , e_n^2$ with respect to the ordered basis $\frac{\partial}{\partial x_1},\cdots ,\frac{\partial}{\partial x_n},\frac{\partial}{\partial y_1}, \cdots \frac{\partial}{\partial y_n}$ is 
$$C=\begin{pmatrix} {\rm I\/} & \Hess(h_1) \\ {\rm I\/} & \Hess(h_2) \end{pmatrix}.$$
We can show that $\det(C)=\det(\Hess(h_2-h_1))$.
The sign between the basis $\frac{\partial}{\partial x_1},\cdots ,\frac{\partial}{\partial x_n},\frac{\partial}{\partial y_1},$ $\cdots \frac{\partial}{\partial y_n}$ and the standard basis $\frac{\partial}{\partial x_1}, \frac{\partial}{\partial y_1}, \cdots, \frac{\partial}{\partial x_n}, \frac{\partial}{\partial y_n}$ is $(-1)^{\frac{n(n-1)}{2}}$.
Hence 
$$\ind_p(L_1, L_2)=s(L_1)s(L_2) \,\sgn(\det(C))\cdot (-1)^{\frac{n(n-1)}{2}}=(-1)^{\frac{n(n-1)}{2}}s(L_1,L_2)\sgn(p)$$

%We say that the orientations of $L,L'$ are the same or coincide if the orientations of projections of $L,L'$ to the zero section coincide and are different or do not coincide otherwise.
\end{proof}

\begin{eg}\label{Givental}
\begin{enumerate}
\item Let $L$ be a constant section of $T^*\R^2$ given by $y_1=a, y_2=b$. $L$ is Lagrangian and a wavefront of $L$ is a plane $h(x_1,x_2)=ax_1+bx_2+c$ in $\R^3$.
The wavefronts of two different constant sections are two non-parallel planes.   
\item (Whitney sphere)
Let $S^n=\{({\bf x},y)\in \R^n\times \R| {\bf x}=(x_1,\cdots,x_n), ||{\bf x}||^2+y^2=1\}$ 
and $w:S^n\rightarrow \R^{2n}, w(x_1,\cdots ,x_n,y)=(x_1,\cdots,x_n,x_1y,\cdots, x_ny)$.

$w$ can be viewed as the union of two sections of $T^*\R^n\cong \R^{2n}$ over the unit disk:
$w_\pm({\bf x})=({\bf x},t_\pm({\bf x}){\bf x})$ where $t_\pm({\bf x})=\pm \sqrt{1-||{\bf x}||^2}$.
Let $L_\pm$ denote the graph of $w_\pm$.

$w_\pm$ have generating functions $h_\pm=\mp\frac13(1-||{\bf x}||^2)^\frac32$.
$\nabla h_+=\nabla h_-$ when $||{\bf x}||=1$ or ${\bf x}$ is the origin.
$L_\pm$ can be sewed smoothly along $||{\bf x}||=1$. 
So $w$ is an immersed Lagrangian $n$-sphere with a double point at the origin.
%If the orientation of $L_\pm$ is given by the one of $\R^n$, they do not coincide when $L\pm$ are sewed along $||x||=1$.

When $n=1$, $h_+$ and $h_-$ have same slopes at $x=0,1,-1$.
Moreover, if an orientation of $w(S^1)$ is given, it induces orientations for $L_\pm$.
For instance, if it is oriented as shown in Figure 2, then $L_+$ is oriented in $x_1$ direction and $L_-$ is oriented in $-x_1$ direction.
So $s(L_+,L_-)=-1$. 

\centerline{
\includegraphics[width=1.0\linewidth]{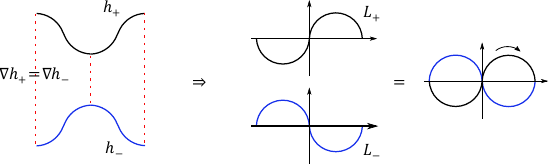}
}
{\centerline{Figure 2}}

When $n=2$, $h_+$ and $h_-$ have same slopes on the circle $||x||=1$ and at the origin $(0,0)$. 
Assume an orientation of $w(S^2)$ is given by a frame near the common boundary of $L_\pm$ as show in Figure 3.
When the frame is moved away from the boundary, we can keep the first vector in $x_2$ direction, then the second one is in $-x_1$ direction for $L_+$ but in $x_1$ direction for $L_-$.
So the induced orientations for $L_+, L_-$ are different (compare with the natural orientation of $\R^2$) and $s(L_+,L_-)=-1$. 

\centerline{
\includegraphics[width=1.0\linewidth]{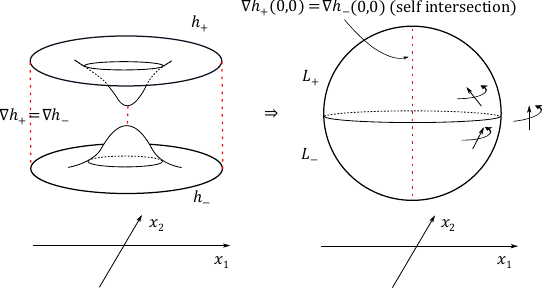}
}
{\centerline{Figure 3}}

Moreover, at the intersection point $p=(0,0,0,0)$, we know that $\Hess (h_+-h_-)=2{\rm I\/}, \det(\Hess (h_+-h_-))=4>0$ and $\sgn(p)=1$.
This also can be observed from Figure 1(a).

\item Let $L_1=L_+\cup L_-$ be the 2-dimensional Whitney sphere in (2) and $L_2$ be a nonzero constant section close to but not intersecting with $L_1$. 
%Let $h_1, h_2$ be generating functions of $L_1, L_2$.
%A wavefront of $L_1$ is shown in Figure 2 and a wavefont of $L_2$ is a tilted plane which is never tangent to 
Without loss of generality, we may assume $L_2$ has a generating function $h_2(x_1, x_2)=x_1$.
Let $h_-'$ be a generating function deformed from $h_-$ as shown in Figure 4 and $L_-'$ be its Lagrangian section.
$L_1'=L_+\cup L_-'$ is another Lagrangian sphere in $T^*\R^2$ with one double point $p=(0,0,0,0)$.
% An example of $h'_2$ is given by $h'_2(x_1,x_2) =2\exp(-1/(1-x_1^2 -x_2^2))$ when $x_1^2 +x_2^2 <1$, and $h'_2(x_1,x_2) \equiv 0$ when $x_1^2 +x_2^2 \geq 1$.

\centerline{
\includegraphics[width=0.6\linewidth]{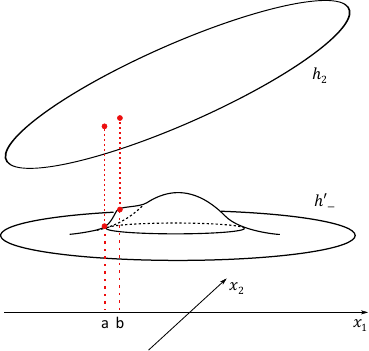}
}
{\centerline{Figure 4}}

Along the line $x_2=0$, $\nabla h_-'= \nabla h_2=(1,0)$ at two points $(a,0), (b,0), a<b<0$.
Moreover, $\frac{\partial h_-'}{\partial x_2}<0$ when $x_2>0$ and $\frac{\partial h_-'}{\partial x_2}>0$ when $x_2<0$ in the deformed area.
So $\nabla h_-'$ is never parallel to $\nabla h_2$ when $x_2\neq 0$ and  $L_-', L_2$ intersect transversally at two points $p_1=(a,0,1,0), p_2=(b,0,1,0)$.
Moreover, $\det(\Hess(h_-'-h_2))<0$ at $(a,0)$ and $\det(\Hess(h_-'-h_2))>0$ at $(b,0)$.
So $\sgn(p_1)=-1, \sgn(p_2)=1$.

%\item 
%The unit circle $p^2+q^2=1$ in $\R^2$ can be considered as the union of the graphs of two functions $p_\pm(q)=\pm \sqrt{1-q^2}$, where $-1\leq q\leq 1$.
%The generating functions are $h_\pm=\pm \frac12(q\sqrt{1-q^2}+\arcsin (q))$.
%$h_+$ and $h_-$ have same slopes at $q=1,-1$.

%\centerline{
%\includegraphics[width=1.2\linewidth]{fig1.jpg}
%}
%\item
%Let ${\bf q}=(q_1,q_2)$ and $t_\pm({\bf q})=\pm\frac12((|{\bf q}|-2)\sqrt{1-(|{\bf q}|-2)^2} +\arcsin(|{\bf q}|-2))$ defined on the annulus $1\leq |{\bf q}|\leq 3$.
%They are given by shifting $h_\pm$ of the previous example and rotating along the vertical axis. 
%The graphs of $t_+$ and $t_-$ are parallel on the boundaries of the annulus. The union of the Lagrangian embeddings, i.e., the graphs of $\nabla t_{\pm}$ in $\R^4$, is a torus.

%\centerline{
%\includegraphics[width=1.2\linewidth]{fig4.jpg}
%}

%\item
%Consider the graph for wavefront in the following figure. 

%\centerline{
%\includegraphics[width=1.5\linewidth]{fig5.jpg}
%}

%$h_+$ and $h_-$ are both flat on the boundary and g interior circles. Moreover, they also have common flat tangent plane at g-1 points between interior circles. 
%So the union is an immersed Lagrangian surfaces of genus g with g-1 double points.

\end{enumerate}
\end{eg}

%If we remove the double point by Lagrangian surgery, we obtain an embedded nonoriented Lagrangian surface with 
%$$\chi=2-2(g+g-1)=-4g\quad (g\geq 2)$$
%So we have Theorem \ref{Givental}.
%So the union of Lagrangian form a sphere with one self-intersection point for $(x_1,x_2)=(0,0)$.
%If we use Lagrangian to remove the self-intersection point, we will obtain an embedded Lagrangian torus

\subsubsection{Surgeries on wavefronts}\label{surgery}
When we consider Lagrangian sections on the cotangent bundle $T^*\R^n$, the sign of Lagrangian handles can be easily read out from their generating functions.

\begin{lem}\label{sign}
Suppose two Lagrangian sections $L_1, L_2$ are oriented and intersect transversally at $p=(x,y)$.
%Let $h_1, h_2$ be their generating functions. 
%Assume a Lagrangian surgery is applied to $L_1, L_2$ at $p$. 
Then the sign of the Lagrangian handle at $p$ is 
$$-s(L_1, L_2)\sgn(p).$$

%Let $p_i:L_i T^*\R^n\rightarrow \R^n$ be the natural 

\end{lem}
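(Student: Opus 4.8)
The plan is to combine the index formula from Lemma \ref{index} with Polterovich's sign formula from Proposition \ref{Polterovich}(1), specialized to $n=2$. Recall that Proposition \ref{Polterovich}(1) says the sign of the Lagrangian handle $\Gamma(l_1,l_2)$ equals $(-1)^{\frac{n(n-1)}{2}+1}\,\ind(l_1,l_2)$, where $\ind(l_1,l_2)$ is the intersection index of the two Lagrangian planes (tangent to $L_1, L_2$) at $p$. The handle sign here depends on the choice of orientations of $l_1$ and $l_2$; in our situation these are induced by the given orientations of the Lagrangian sections $L_1$ and $L_2$. So the quantity to compute is $(-1)^{\frac{n(n-1)}{2}+1}\,\ind_p(L_1,L_2)$.

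First I would substitute the expression for $\ind_p(L_1,L_2)$ provided by Lemma \ref{index}, namely $\ind_p(L_1,L_2)=(-1)^{\frac{n(n-1)}{2}}s(L_1,L_2)\,\sgn(p)$. Plugging this in, the sign of the Lagrangian handle becomes
$$(-1)^{\frac{n(n-1)}{2}+1}\cdot(-1)^{\frac{n(n-1)}{2}}\,s(L_1,L_2)\,\sgn(p)=(-1)^{n(n-1)+1}\,s(L_1,L_2)\,\sgn(p)=-\,s(L_1,L_2)\,\sgn(p),$$
since $n(n-1)$ is always even, so $(-1)^{n(n-1)+1}=-1$. This already gives the claimed formula $-s(L_1,L_2)\sgn(p)$ for every $n$; in particular it holds for $n=2$.

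The one point that needs care — and the main (mild) obstacle — is checking that the two notions of "sign of a Lagrangian handle'' and "intersection index'' are taken with respect to compatible orientation conventions: the handle sign in Proposition \ref{Polterovich} is defined via whether the orientations of $l_1$ and $l_2$ "coincide in the image of $F'$,'' while $\ind_p(L_1,L_2)$ in Lemma \ref{index} is the oriented intersection number $T_pL_1\cdot T_pL_2$ inside $T_p(T^*\R^n)$ with its standard symplectic orientation. One must verify these agree (or differ by a universal sign already absorbed into the $(-1)^{\frac{n(n-1)}{2}+1}$ in Proposition \ref{Polterovich}(1)), which is a matter of tracing through the definition of $F(\xi,t)=e^{-t}\xi+e^tJ\xi$ and observing how the orientation of $l_1$ propagates along the handle to $l_2=J(l_1)$. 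Once this bookkeeping is confirmed, the computation above is immediate and the lemma follows. Note that no new geometric input is required beyond Lemmas \ref{index} and \ref{Polterovich}; the content is purely the arithmetic of signs.
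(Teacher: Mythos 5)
Your proof is correct and is essentially identical to the paper's: both substitute the index formula of Lemma \ref{index} into Polterovich's sign formula from Proposition \ref{Polterovich}(1) and simplify the powers of $-1$. Your extra remark about checking that the orientation conventions for the handle sign and the intersection index are compatible is a reasonable point of care that the paper's one-line proof leaves implicit.
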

%In the case of intersection of two local Lagrangian sections of csotangent bundle, we can characterize the feature completely.
%Let $L\subset T^*\R^n$ be a Lagrangian section with a generating function $h:\R^n\rightarrow \R$.
%At $x=(q_0,\nabla h(q_0))$, 
\begin{proof}
By Proposition~\ref{Polterovich}(1) and Lemma~\ref{index}, the sign of the Lagrangian handle at $p$ is 
$$(-1)^{\frac{n(n-1)}{2}+1}\cdot (-1)^{\frac{n(n-1)}{2}}s(L_1, L_2)\sgn(p)=-s(L_1, L_2)\sgn(p)$$
\end{proof}

As we mentioned in the paragraph before Proposition~\ref{Polterovich}, the topological feature of the resulting manifold after Lagrangian surgery is independent of the choice of the Lagrangian sections.
When $n=2$, the effect of the surgery is completely determined by $\sgn(p)$ and can be visualized from their wavefronts easily.

\centerline{
\includegraphics[width=0.8\linewidth]{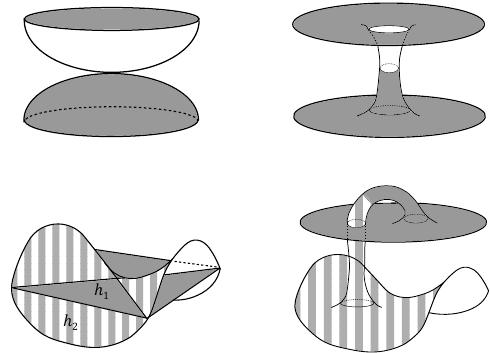}
}
{\centerline{Figure 5}}
%the orientation of the Lagrangian surgery will coincide with the one for $l_1, l_2$ and is oriented-revering (see Figure ?)
%Then their orientations are the same.
% intersection at $x$ has the same sign as the intersection of $l_1, l_2$ if 
%In this case, the Lagrangian surgery at $x$ is negative.
%The sign will change if either the coincidence of orientations of $L, L'$ changes or the sign of $\det(\Hess(h'-h))$ changes.

%In summary, the Lagrangian surgery at $x$ is negative if either the orientations of $L, L'$ are the same and $\det(\Hess(h'-h))>0$, or the orientations of $L, L'$ are different and $\det(\Hess(h'-h))<0$.
%The surgery is positive if either the orientations of $L, L'$ are the same and $\det(\Hess(h'-h))<0$ or the orientations of $L, L'$ are different and $\det(\Hess(h'-h))>0$.

%If we change the orientation of one of the section, the sign of the surgery will also change.
\begin{eg} \label{eg25}
\begin{enumerate}
\item The 2-dimensional Whitney sphere in Example \ref{Givental}~(2) is an immersed sphere in $T^*\R^n$ with a self-intersection point at $p$. 
If it is oriented, we have shown that $s(L_+,L_-)=-1$ and $\sgn(p)=1$.
By Lemma~\ref{sign}, the Lagrangian handle at $p$ is positive and the  resulting manifold is an embedded Lagrangian torus in $T^*\R^2$.
%\item
%In Example \ref{Givental}~(4), the orientation of $L_\pm$ are different after sewing together. Moreover, all self intersection points are saddle points and $\det(\Hess(h_+-h_-))<0$.
%The Lagrangian surgeries at these points are negative and the resulting manifold is a non-oriented surface. 

\item Consider the Lagrangian surfaces $L_1', L_2$ in Example \ref{Givental}~(3).
There are three intersection or self intersection points: $p, p_1, p_2$.
Assume $L_1', L_2$ are oriented such that $s(L_-',L_2)=1$. 
By Lemma~\ref{sign}, the Lagrangian handle is positive at $p,p_1$ and negative at $p_2$.
If we apply Lagrangian surgery at $p_1$, the resulting manifold $L_3$ is diffeomorphic to $L_1'\# L_2$ with natural orientation and two double points $p, p_2$.
Applying the surgery to $L_3$ at $p$, by Proposition~\ref{Polterovich}, the resulting manifold $L_4$ also has natural orientation and is diffeomorphic to $L_1'\# L_2\# T^2$ with one double point $p_2$.
Finally, we can apply the surgery to $L_4$ at $p_2$ and get an embedded Lagrangian surface which is diffeomorphic to $KB\# L_2\# T^2$ or $L_2\# 4\R\P^2$.

\end{enumerate}
\end{eg}

%\begin{thm} [\cite{G}]\label{Givental}  
%Any non-oriented surface with $\chi\equiv 0$(mod 4) except Klein bottle has Lagrangian embedding in $\R^4$.  Precisely, these surfaces are $(4p+2)\R\P^2$ for $p\geq 1$. 
%\end{thm}

%\subsubsection{Lagrangian $\R\P^2$}
%\begin{prop}[\cite{BLW}, Lemma 4.12]\label{lagr rp2}
%Let $B$ be a symplectic ball and $B_3\cong B\# 3\overline{\C\P^2}$ with equal symplectic area for exceptional $-1$-spheres $e_i$ and $E_i=[e_i]$. Then there exists an embedded Lagrangian $\R\P^2$ representing $E_1+E_2+E_3$.
%\end{prop}

\subsubsection{Existence of embedded non-orientable Lagrangian surfaces}
Now we can prove the existence of embedded non-orientable Lagrangian surfaces.
\begin{proof}[Proof of Proposition \ref{general existence}]
By a classical result of Thom (\cite{Thom54}), any mod 2 homology class is represented by a smooth map
 $f:\Sigma\to M$, for some  surface $\Sigma$.  
If necessary, by connecting sum with  an appropriate number of locally null-homologous $\R\P^2$, we can assume 
that $\Sigma$ is non-orientable and  $\chi(\Sigma)\equiv \langle w_2(X), A\rangle \pmod 2$.
Then there is a Lagrangian immersion $\Sigma\to (X, \omega)$ by Proposition \ref{h principle w2 version}. 
Perform Lagrangian surgeries on the double points to obtain an embedded non-orientable Lagrangian surface in the class $A$. 
\end{proof}

We can further construct more Lagrangian surfaces with arbitrary small Euler numbers.
 
\begin{prop} \label{add 4}
If $A$ is represented by a Lagrangian surface $L$, then it is represented by a Lagrangian surface of type $L\#(4l)\R\P^2$ for any positive integer $l$. 
\end{prop}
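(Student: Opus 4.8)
The plan is to reduce to a purely local construction in $T^*\R^2$, already encapsulated in Example \ref{eg25}(2). Suppose $A$ is represented by an embedded Lagrangian surface $L\subset(X,\omega)$. Pick a Darboux chart $U\cong B^4\subset T^*\R^2$ disjoint from $L$ (or, if one prefers, a chart meeting $L$ in a single Lagrangian section $y_1=a$, $y_2=b$; disjointness is cleaner). Inside $U$ we place a standard Whitney sphere $L_1=L_+\cup L_-$ together with a nearby constant section $L_2$ as in Example \ref{Givental}(3). After the wavefront deformation producing $L_1'=L_+\cup L_-'$, the configuration $L_1'\cup L_2$ has exactly three (self-)intersection points $p,p_1,p_2$, and by Lemma \ref{sign} the Lagrangian handles are positive at $p,p_1$ and negative at $p_2$. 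Performing Lagrangian surgery successively at $p_1$, then $p$, then $p_2$ yields, by Proposition \ref{Polterovich}, an embedded connected Lagrangian surface $F\subset U$ diffeomorphic to $L_2\#T^2\#KB\cong 4\R\P^2$, and by Proposition \ref{Polterovich}(3a) the class $[F]$ is zero in $H_2(X;\Z_2)$ (being contained in a ball). Thus $L\sqcup F$ is an embedded Lagrangian surface in $X$, and we may make it connected by one further Lagrangian surgery joining a point of $L$ to a point of $F$ along a Lagrangian handle supported in a thin tube; since $F$ is non-orientable, Proposition \ref{Polterovich}(2) gives $L\# F\cong L\#4\R\P^2$ regardless of the sign of that handle, and Proposition \ref{Polterovich}(3a) shows the mod $2$ class is still $A$. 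This proves the case $l=1$.

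For general $l$ I would simply iterate: either repeat the above construction $l$ times in $l$ disjoint Darboux balls and connect-sum all the $4\R\P^2$ pieces onto $L$ (using non-orientability of each piece so that the handle signs are irrelevant), or, more economically, note that the statement for $l=1$ applied to $L\#4\R\P^2$ in place of $L$ gives $L\#8\R\P^2$, and induct. Either route gives a Lagrangian surface of type $L\#(4l)\R\P^2$ representing $A$ for every positive integer $l$.

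The steps, in order: (i) choose the Darboux chart disjoint from $L$; (ii) import the configuration $L_1'\cup L_2$ of Example \ref{Givental}(3) and record the three handle signs via Lemma \ref{sign}; (iii) resolve $p_1$, then $p$, then $p_2$, tracking the diffeomorphism type with Proposition \ref{Polterovich}(2) to land on a Lagrangian $4\R\P^2$ in a ball, hence mod-$2$ nullhomologous; (iv) connect-sum this piece onto $L$ by one more Lagrangian surgery, using that the new summand is non-orientable so the sign does not matter, and invoke Proposition \ref{Polterovich}(3a) to keep the class equal to $A$; (v) iterate for arbitrary $l$. The only genuinely delicate point is bookkeeping in step (iii)–(iv): one must be sure that the final surface is \emph{connected} and \emph{embedded} and that the connecting surgery in step (iv) can be performed inside a neighborhood so thin that it does not disturb the rest of $L$ — but this is exactly the local nature of Weinstein's neighborhood theorem together with the fact, quoted before Proposition \ref{Polterovich}, that the isotopy class of a Lagrangian handle is independent of all choices. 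Everything else is a direct application of results already established in the excerpt, so I expect no real obstacle beyond this routine care.
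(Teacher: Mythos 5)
There is a genuine gap, and it is located exactly where you placed the configuration: in a Darboux ball \emph{disjoint} from $L$. The surface $L_2$ of Example \ref{Givental}(3) is a constant section of $T^*\R^2$, i.e.\ a non-compact Lagrangian plane, not a closed sphere; so the object you call $F\cong L_2\#T^2\#KB$ is not a closed surface sitting in the ball. Worse, no repair is possible: a closed embedded Lagrangian $4\R\P^2$ contained in a Darboux ball would be null-homologous, and Audin's congruence (the necessity direction quoted in the Introduction) forces $\chi\equiv\mathcal P(0)=0\pmod 4$ for a null-homologous non-orientable Lagrangian, whereas $\chi(4\R\P^2)=-2$. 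So step (iii) of your plan produces something that cannot exist. Your step (iv) is also not available as stated: Lagrangian surgery desingularizes a \emph{transverse intersection point}, so you cannot join two disjoint Lagrangians by "one more surgery along a thin tube"; and since $[F]\cdot[L]=0$ in $\Z_2$, any perturbation creating intersections creates an even number of them, so resolving them all changes the diffeomorphism type by more than a single connected sum.

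The paper's proof avoids all of this by making the constant section $L_2$ be $L$ itself: by the local Weinstein theorem one chooses the chart so that $\phi(L\cap U)$ \emph{is} the constant section. The null-homologous immersed Whitney sphere $L_1'$ then has one self-intersection point $p$ and meets $L$ at the two points $p_1,p_2$; the three surgeries of Example \ref{eg25}(2) attach the sphere to $L$ by two tubes and resolve its double point, and it is precisely this two-point attachment (one tube is the trivial connected sum with $S^2$, the second tube and the self-intersection each add a handle, one of them non-orientably) that yields $L\#4\R\P^2$ directly, in the same mod $2$ class by Proposition \ref{Polterovich}(3). Your iteration step (v) is fine once the $l=1$ case is done correctly, but the $l=1$ case must be carried out with the immersed sphere intersecting $L$, not in a ball disjoint from it.
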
 

%We are ready to prove Proposition \ref{add 4} 
\begin{proof}%[Proof of Proposition ]
Let $f:L\rightarrow X$ be a Lagrangian submanifold. By a local version of Weinstein's Lagrangian neighborhood theorem, for any $x\in L$, there exists a neighborhood $U$ and a symplectic embedding $\phi:(U,\omega)\to (T^*\R^n,\omega_0)$ such that $\phi(L\cap U)$ is the intersection of $\phi(U)$ with the constant section $L_2$ in Example~\ref{Givental}(3). 
%Actually, we can choose $\phi$ such that $\phi(L\cap U)$ is a portion of a nonzero constant section close to the zero section. 
%The generating function of $\phi(L\cap U)$ will be a non-constant linear function.
%If $\phi(L\cap U)$ is close to the zero section, its wavefront will be parallel with the wavefront of Whitney sphere in Example \ref{Givental}~(1) at two points.
As shown in Example \ref{eg25}~(3), we can take a null homologous immersed Lagrangian sphere with a self intersection point, and intersecting $L$ at two other points. Apply Lagrangian surgeries to construct a Lagrangian submanifold $L'$, which is diffeomorphic to $L\# 4\R\P^2$.
By Proposition~\ref{Polterovich}(3), $L'$ is in the same mod 2 homology class of $L$.

We can repeat this process and get $L\# (4l)\R\P^2$ for any $l\in \N$. 
\end{proof}

\begin{rem}
It is more natural to consider the Clifford torus to increase the genus.
Here we choose a deformation of Whitney sphere because it is easier to give a clear explanation for the whole process via wavefront in this case.
Actually, the construction in Example \ref{eg25}~(2) is generic and should work in more general situations. 
\end{rem}
%\subsubsection{Audin' congreunce}
%Recall that Audin (\cite{A1}) showed that $[L]\cdot [L] \equiv \chi(L) \pmod 4$ for any embedded Lagrangian surface $L$. 
%In fact, for a Lagrangian immersion $f:L\to M$, $[L]\cdot [L] \equiv \chi(L)+2 D(f) \pmod 4$, where 
%$D(f)$ is the number of double points. 
%Notice that if $[L]\cdot [L] \equiv \chi(L) \pmod 4$ is not satisfied for a Lagrangian immersion, then $D(f)$ is an odd number. 

%Since $\langle w_2(W), A\rangle \equiv A\cdot A \pmod 2$, by Proposition \ref{h principle w2 version}  
%\begin{prop} \label{non-orientable h principle} 
%Let $(X, \omega)$ be a symplectic 4-manifold and $A$ a mod 2 homology class. Suppose 
%$\Sigma$ is a non-orientable surface  and let $[\Sigma]$ be the fundamental class  
 %which  is understood to be in $H_n(\Sigma; \mathbb Z)$ if $L$ is orientable and oriented, and 
%in $H_2(\Sigma; \mathbb Z_2)$.  
%if $L$ is not orientable. 
%Let $A=f_*([\Sigma])$. 
%Suppose  $f:\Sigma\to X$ is a smooth map such
%that $f_*([\Sigma])=A$ and $\chi(\Sigma)\equiv A\cdot A \pmod 2$. Then there is a Lagrangian immersion from $\Sigma$ to $(X, \omega)$ which is homotopic to $f$. 
%\end{prop}

%The existence of embedding of non-orientable Lagrangian surfaces, Proposition \ref{general existence},   then follows from the  Lagrangian surgery construction, which we review next. 

\subsection{Lagrangian blow-up}
In this section, we describe the procedure of Lagrangian blow-up in \cite{Ri} and give an important example.  
Let $\tilde{B}=\{(z,l)\in \C^n\times \C\P^{n-1}\n z\in l\}$, $\tilde{B}_\varepsilon=\{(z,l)\in \ti B \n |z|\leq \varepsilon\}$, and $B_r=\{z\in \C^n \n |z| \leq r\}$. 
There are two natural projections $p_1: \ti B\to \C^n$, and $p_2: \ti B \to \C\P^{n-1}$. $p_1$ implies that $\ti B$ is the blowup of $\C^n$ at the origin, and $p_2$ implies that $\ti B$ is also the universal line bundle over $\C\P^{n-1}$.  For any $\l>0$, let $\omega_\lambda=p_1^*\omega_0+\lambda^2p_2^*\omega_{FS}$ be the induced symplectic form on $\tilde{B}$. 

There is a symplectomorphism $\alpha:(\tilde{B}_\varepsilon-\tilde{B}_0,\omega_\lambda)\cong (B_{\sqrt{\lambda^2+\varepsilon^2}}-B_\lambda,\omega_0)$, (cf. \cite{MS98}, Lemma 7.11).
In particular, it preserves the real parts of $\tilde{B}$ and $\C^n$. 

Suppose $X$ is a symplectic manifold and $x\in X$. %we can construct a new symplectic manifold $\tilde{X}$ by blowing up at a point $x\in X$. 
By Darboux's theorem, there is a symplectic embedding $\psi: (B_\delta, \w_0) \to (X,\w)$ for $\delta\geq \sqrt{\lambda^2+\varepsilon^2}$ when $\l$ and $\varepsilon$ are sufficiently small, and $\psi(0)=x$. Let 
$\tilde{X}=(X-\psi(B_\lambda))\amalg \tilde{B}_{\varepsilon}/\sim$, with $\psi(\a(y)) \sim y$ for any $y\in \tilde{B}_\varepsilon-\tilde{B}_0$. 
The closed 2-form $\tilde{\omega}$ induced by $\omega$ and $\omega_0$ is a symplectic structure on $\tilde{X}$.

If $X$ is a symplectic 4-manifold, $L$ is a Lagrangian surface in $X$ and $x\in L$, then there exists a neighborhood $U$ of $x$ and a symplectic embedding $\phi:(U,\omega)\to (\C^{2},\omega_0)$ such that $\phi(L\cap U)$ is the real part.
By Theorem 1.21 in \cite{Ri}, after blowing up at $x$, $L$ is lifted to a Lagrangian surface $\tilde{L}\subset \tilde{X}$ with  $\tilde{L}\cong L\#\R\P^2$. Moreover, the mod $2$ class 
$[\tilde L]_2$ represented by $\tilde L$ satisfies 
$$[\tilde L]_2=[L]_2+E,$$ where $[L]_2$ is the mod $2$ class represented by $L$ and $E$ is the mod 2 reduction of the exceptional divisor, and we use the natural decomposition  $H_2(\tilde X, \Z_2)=H_2(X, \Z_2)\oplus \Z_2 E$. 

%We use the following model of $X=\C\P^2\#  \overline{\C\P^2}$ which explains that it has the structure of a nontrivial $S^2$-bundle over $S^2$ (see \cite{MS98}, Example 7.4):
An important example is the holomorphic blow-up of $\C\P^2$:
$$\tilde{X}=\{ ([W_1:W_2],[Z_0:Z_1:Z_2]) \in \C\P^1 \times \C \P^2 \n Z_1 W_2 =Z_2 W_1 \}. $$

There are natural projections $p_i: \tilde{X}\to \C\P^i$, $i=1,2$. The projection $p_1: \tilde{X}\to \C\P^1$ is a nontrivial $\C\P^1$-bundle over $\C\P^1$. The preimage $p_2^{-1}([1:0:0]) =\C\P^1 \times \{[1:0:0]\}$ is the exceptional curve, and $p_2: \tilde{X}-p_2^{-1}([1:0:0]) \to \C\P^2 -\{[1:0:0]\}$ is a diffeomorphism. 

There is a family of K\"ahler forms on $\C\P^1 \times \C \P^2$, given by $\w=  p_1^*\tau_1  +\l^2 p_2^* \tau_2$, where $\l>0$, and $\tau_i$ are the Fubini-Study forms on $\C\P^i$. Then $\tilde{X}$ inherits a family of K\"ahler forms.

%Let $X=\C\P^2$ and $L$ be the real part of $X$.
%$\tilde{X}$ is the blowup of $X$ at $[1:0:0]$.
Let $\overline{H} \in H_2(\ti X;\Z)$ be the hyperplane class of $\C\P^2$, and $\overline E$ the exceptional class. 
The real locus of $\tilde{X}$ is diffeomorphic to $\R\P^2 \# \R\P^2 \cong KB$ representing $H+E \in H_2(\ti X,\Z_2)$. It is an embedded Lagrangian surface with respect to that family of  K\"ahler forms, and also has the structure of a nontrivial $\R\P^1$-bundle over $\R\P^1$. Note that a fiber of nontrivial $S^2$-bundle over $S^2$ represents the integral class $\overline{H}-\overline{E}$. Hence the Lagrangian KB and a fiber of the nontrivial $S^2$-bundle represent the same mod 2 class. 

\section{Minimal genus Lagrangian surfaces in rational  $4-$manifolds}

%\subsection{Lagrangian surfaces in $\C\P^2\# k\overline{\C\P^2}$}
We will prove the following result 

%\subsubsection{Minimal genus}

%\begin{prop}
%Let $X$ be a rational surfaces and $K$ an embedded Lagrangian Klein bottle. 
%Then $[K]\neq 0\in H_2(X;\Z_2)$.
%\end{prop}

%\begin{thm} \label{cp2}
%Let $X=\C\P^2\# k\overline{\C\P^2}$. Then each nonzero class $A\in H_2(X;\Z_2)$ is represented by a non-oriented Lagrangian surface $\Sigma$ (for some symplectic structure)  if and only if $\mathcal P(A)\equiv \chi(\Sigma) \pmod 4$.
%\end{thm}

%By  Proposition \ref{add 4}, the theorem is reduced to the following proposition.
\begin{prop} \label{prop28}
Let $X$ be a rational $4-$manifold %=\C\P^2\# k\overline{\C\P^2}$ 
and $A\in H_2(X;\Z_2)$ a nonzero class. $A$ is represented by a Lagrangian $l\R\P^2, 0\leq l\leq 3$ (for some symplectic structure) if and only if $\mathcal P(A)\equiv 2-l \pmod 4$.
Here, we use the convention $S^2=0 \R\P^2$. 
\end{prop}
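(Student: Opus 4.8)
The plan is to prove the two directions separately; the ``only if'' part is quick and the ``if'' part is the substance. For ``only if'': if $A$ is represented by an embedded $l\R\P^2$ with $l\geq 1$, Audin's congruence theorem gives $\mathcal P(A)\equiv\chi(l\R\P^2)=2-l\pmod 4$; and if $A$ is represented by a Lagrangian sphere ($l=0$), a Weinstein neighborhood of the sphere is a disk cotangent bundle $DT^*S^2$, so its normal Euler number, hence the self-intersection $A\cdot A$, equals $-\chi(S^2)=-2$, giving $\mathcal P(A)\equiv A\cdot A\equiv 2\pmod 4=2-0$. Since the four residues $2-l$ for $l=0,1,2,3$ exhaust $\Z_4$, the value of $\mathcal P(A)$ determines $l$, and it remains to realize each case.

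For ``if'' I would assemble a short list of model Lagrangians with explicitly computed mod $2$ classes. The standard $\R\P^2\subset\C\P^2$ (the fixed locus of conjugation) is Lagrangian for the Fubini--Study form and represents $[H]\in H_2(\C\P^2;\Z_2)$, giving $l=1$. Applying Rieser's Lagrangian blow-up (the statement quoted from \cite{Ri}) once, resp.\ twice, at points of this surface --- each time replacing $L$ by $L\#\R\P^2$ and adding an exceptional class to $[L]_2$ --- produces a Lagrangian Klein bottle in $\C\P^2\#\overline{\C\P^2}$ in class $[H+E_1]$ (the Lagrangian Klein bottle realized as the real locus of the K\"ahler blow-up of $\C\P^2$) and a Lagrangian $3\R\P^2$ in $\C\P^2\#2\overline{\C\P^2}$ in class $[H+E_1+E_2]$, giving $l=2$ and $l=3$. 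To cover the smallest blow-ups I would also run the surgery construction illustrated in Example~\ref{eg25} starting from the Whitney sphere of Example~\ref{Givental}(2): blowing it up at a smooth point yields an immersed Lagrangian $\R\P^2$ with a single double point in $\C\P^2\#\overline{\C\P^2}$ representing $[E_1]$, and one Lagrangian surgery (Proposition~\ref{Polterovich}(2), non-orientable case) desingularizes it to a Lagrangian $3\R\P^2$ in class $[E_1]$. Finally, $l=0$ is supplied by the classification of Lagrangian spheres \cite{LW}: a $(-2)$-class orthogonal to some symplectic canonical class is a Lagrangian sphere class, which applies to $E_1-E_2$ in $\C\P^2\#k\overline{\C\P^2}$ ($k\geq 2$) and to the antidiagonal class $F_1-F_2$ in $S^2\times S^2$. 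Blowing up at a point \emph{off} a Lagrangian surface changes neither the surface nor its mod $2$ class, so each model can be transported into any iterated blow-up of its ambient manifold.

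The main remaining step --- and the one I expect to be the real obstacle --- is a reduction showing that any nonzero $A$ with $\mathcal P(A)\equiv 2-l$ is carried by a self-diffeomorphism of $X$ to one of these model classes. Take the integral lift $\bar A$ of $A$ with coefficients in $\{0,1\}$ in the standard basis; it is primitive and has square $\leq 1$ congruent to $\mathcal P(A)\bmod 4$. Using the diffeomorphisms of rational $4$-manifolds that realize lattice automorphisms --- permutations and sign changes of the $E_i$, quadratic Cremona transformations, and the identification $S^2\times S^2\#\overline{\C\P^2}\cong\C\P^2\#2\overline{\C\P^2}$ --- together with the fact that the automorphism group of an odd indefinite unimodular lattice of rank $\geq 3$ is transitive on primitive vectors of a fixed square, one reduces $\bar A$ to $E_1-E_2$, $H$, $H-E_1$ or $H-E_1-E_2$ according as $\mathcal P(A)$ is $2,1,0,3$. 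The low-rank manifolds $\C\P^2$, $\C\P^2\#\overline{\C\P^2}$, $\C\P^2\#2\overline{\C\P^2}$ and $S^2\times S^2$, where this transitivity can fail, each carry only finitely many nonzero mod $2$ classes and must be checked by hand: one verifies that each such class matches a model construction above (and that the values of $l$ which cannot occur are exactly those admitting no class with the corresponding Pontrjagin square).

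The subtle point throughout is that one needs the surface of \emph{minimal} genus: Proposition~\ref{general existence} and Lagrangian surgery only ever \emph{increase} the number of crosscaps, so overshooting cannot be corrected, and in the minimal-blow-up cases one is forced to produce an immersed Lagrangian with exactly one double point of the correct sign --- hence the appeal to the Whitney sphere and to Givental's local models rather than to the generic existence result. I would expect the most delicate individual case to be producing a Lagrangian Klein bottle directly in $S^2\times S^2$ in the class $F_1$, together with the verification that the lattice automorphisms used in the reduction are realized by honest diffeomorphisms in the low-rank cases; everything else is bookkeeping built on the quoted results of \cite{Ri}, \cite{Gi}, \cite{LW} and Proposition~\ref{Polterovich}.
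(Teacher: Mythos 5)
Your ``only if'' direction and your list of model Lagrangians are fine, and your overall strategy (explicit models plus a diffeomorphism reduction) is genuinely different from the paper's. But the reduction step, which you yourself flag as the real obstacle, has a concrete gap. A lattice automorphism preserves the self-intersection number of an \emph{integral} class, and the $\{0,1\}$-coefficient lift $\bar A$ of a mod $2$ class generally has square equal to the model's square only modulo $4$, not on the nose: for instance $A=E_1+\cdots+E_6$ has $\{0,1\}$-lift of square $-6$, while your target $E_1-E_2$ has square $-2$, so Wall's transitivity on primitive vectors \emph{of a fixed square} cannot move one to the other. To repair this you must first exhibit, for each nonzero mod $2$ class with $\mathcal P(A)\equiv 2$, some \emph{other} integral lift of square exactly $-2$ (and, to get a Lagrangian sphere out of it, one that is orthogonal to a canonical class and represented by a smooth sphere so that \cite{LW} applies). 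That is precisely the content the paper supplies explicitly with the classes $\overline{Z_t}=t\overline H-\overline{E_1}-\cdots-\overline{E_{2t+1}}-(t-1)\overline{E_{2t+2}}$, whose mod $2$ reductions sweep out $E_1+\cdots+E_{4l+2}$ and $H+E_1+\cdots+E_{4l+3}$; the cases $l=1,2,3$ are then obtained by Lagrangian blow-up at $1$, $2$ or $3$ points of these spheres rather than by moving classes around with diffeomorphisms. (Even after fixing the square, you would still owe an argument that the resulting isometry is realized by a diffeomorphism of $X$, and that transitivity on integral vectors implies transitivity on mod $2$ classes of fixed Pontrjagin square.)

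A second, smaller gap: you correctly identify the fiber class $F$ in $S^2\times S^2$ as the delicate leftover case but do not actually construct a Lagrangian Klein bottle there. The paper does this by taking the real-locus Klein bottle in $\C\P^2\#\overline{\C\P^2}$ (which represents the fiber class of the nontrivial $S^2$-bundle) and performing a symplectic fiber sum of two copies of $\C\P^2\#\overline{\C\P^2}$ along a fiber away from the real locus, producing the trivial bundle $S^2\times S^2$ with the Klein bottle surviving in class $F$. Your remaining constructions (the real $\R\P^2$ in $H$, its one- and two-point Lagrangian blow-ups for $H+E_1$ and $H+E_1+E_2$, and a $3\R\P^2$ in $E_1$ via a local Givental/Whitney-sphere construction) match the paper's treatment of the low-complexity classes.
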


\begin{proof}%[Proof of Proposition \ref{prop28}]
%\subsubsection{Symplectic blowup relative to a Lagrangian submanifold \cite{Ri}}
%One key ingredient in constructing Lagrangian surfaces is the existence of symplectic structures and Lagrangian surfaces under blowing up. 

%A symplectic structure on $X$ is constructed as follows.

The conditions are necessary by \cite{A1}. So we just need to show that they are sufficient.

First we consider the case $X=\C\P^2\# k\overline{\C\P^2}, k\in \N$.
%First, we construct Lagrangian spheres. 
Assume $\overline{H}$ is the generator of $H_2(\C\P^2,\Z)$, $\overline{E_i}$'s are the exceptional classes of $H_2(X,\Z)$ and $H, E_i\in H_2(X,\Z_2)$ are the reductions of $\overline{H}, \overline{E_i}$ respectively. Any class in $H_2(X, \Z_2)$ is of the form 
$$A=aH+b_1E_1+\cdots+ b_kE_k,$$
where $a, b_i$ are either $0$ or $1$. Note that $\mathcal{P}(E_1+\cdots+ E_k)=-k\mod4$ and $\mathcal{P}(H+E_1+\cdots+ E_k)=1-k\mod4$. 
All our conclusions and results are valid under permutation of exceptional divisors. For convenience, we will choose one class in discussion, which also works for any other classes of the same type.
For example, $H+E_1$ can also represent $H+E_2, H+E_3$ etc.  
%We use $l\{sE\}$ to denote the class of the form $sE_{i_1}+\cdots+sE_{i_l}$.
%We also ignore the permutations of $E_i$.
%For example, $$H+2\{-E\}+3\{2E\}=H-E_1-E_2+2E_3+2E_4+2E_5.$$

Let $K_X=-3\overline{H}+\sum_i\overline{E_i}$ be the standard canonical class. 
For any $t\in\N$, let  
$$\overline{Z_t}=t\overline{H}-\overline{E_1}-\cdots-\overline{E_{2t+1}}-(t-1)\overline{E_{(2t+2)}}.$$ 
We explain that $\overline{Z_t}$ is represented by a smooth sphere.
Let $D$ be a configuration of $t$ degree one algebraic curves $D_1,\cdots ,D_{t}$ in $\C\P^2$, such that $D_1,\cdots, D_{t-1}$ pass through a common point $x$, while $D_t$ misses $x$. Blow up at $x$ and other $2t+1$ points in $D_t$ which are away from the intersection points $D_1\cap D_t,\cdots, D_{t-1}\cap D_t$, the lift $\tilde{D}$ of $D$ in $\C\P^2\# (2t+2)\overline{\C\P^2}$ is in the class $\overline{Z_t}$ if we arrange the indices of exceptional divisors appropriately. 
A sphere is given by resolving the intersecting points $D_1\cap D_t,\cdots, D_{t-1}\cap D_t$. 
Hence $\overline{Z_t}$ is represented by a smooth sphere.

By \cite{LW} Proposition 5.6, an integral class $Z$ is represented by a Lagrangian sphere (with respect to some symplectic structure $\omega$ with canonical class $K_X$) if and only if $Z$ is represented by a smooth sphere, $Z\cdot K_X=0$ and $Z^2=-2$. 
It can be shown straightforwardly that $\overline{Z_t}$ satisfies $\overline{Z_t}\cdot K_X=0$ and $\overline{Z_t}^2=-2$. 
Hence $\overline{Z_t}$ (and its reduction $Z_t$) is represented by a Lagrangian sphere.

The reduction of $\overline{Z_t}$ is $E_1+\cdots +E_{4l+2}$ when $t=2l$ and is   
$H+E_1+\cdots +E_{4l+3}$ when $t=2l+1$. Therefore, the  mod $2$ classes in the two sequences 
$$\{E_1+\cdots +E_{4l+2}, l\geq 0\}\quad \mbox{and}   \quad \{H+E_1+\cdots +E_{4l+3}, l\geq 0\}$$ are represented by Lagrangian spheres. 

We perform the Lagrangian blow-up construction (see Section 2.3) at one point of a Lagrangian sphere in the mod $2$ class $E_1+\cdots +E_{4l+2}$ to obtain a Lagrangian $\R\P^2$ in the mod $2$ class $$E_1+\cdots +E_{4l+3}$$
for any $l\geq 0$. And by repeating this process, for any $l\geq 0$, we obtain a  Lagrangian $2\R\P^2$ in the mod $2$ class $$E_1+\cdots +E_{4l+4}$$ and a Lagrangian $3\R\P^2$ in the mod $2$ class $$E_1+\cdots +E_{4l+5}.$$ Similarly, for any $l\geq 0$, by
blowing up at 1, 2 or 3 points of a Lagrangian sphere in the mod $2$ class  $H+E_1+\cdots +E_{4l+3}$, we can construct Lagrangian $\R\P^2, 2\R\P^2=KB$ or $3\R\P^2$ in the mod $2$ classes $$H+E_1+\cdots +E_{4l+4}, H+E_1+\cdots +E_{4l+5}, H+E_1+\cdots +E_{4l+6}$$ respectively.    
%It is straightforward to check the condition  $\mathcal P(A)\equiv 2-l \pmod 4$ is satisfied for these Lagrangian surfaces.

We are only left with the mod $2$ classes  $$0, H, E_1, H+E_1, H+E_1+E_2$$ to consider. 
The real part of $\C\P^2$ is a Lagrangian $\R\P^2$ of class $H$. A Lagrangian KB of $H+E_1$ and a Lagrangian $3\R\P^3$ of $H+E_1+E_2$ can be constructed by blowing up  one or two points on this $\R\P^2$. 
Blowing up at a point of the Clifford torus, we get a Lagrangian $3\R\P^2$ representing $E_1$. 

Since $(S^2 \times S^2)\# k\overline{\C\P^2}$ is diffeomorphic to $\C\P^2\# (k+1) \overline{\C\P^2}$ for any $k\geq 1$, the remaining rational $4$-manifold is $S^2 \times S^2$.  $H_2(S^2 \times S^2,\Z) \cong \Z \oplus \Z$ has two generators,  the base class (or section class) $\overline{B}=[S^2 \times pt]$ and the fiber class $\overline{F}=[pt \times S^2]$. 
Let $B, F\in H_2(S^2 \times S^2,\Z_2)$ denote their reductions.%has three nonzero classes, namely $B$, $F$, and $B+F$. 

Let $\phi: S^2\to S^2$ be the antipodal map, and $\w$ a symplectic form on $S^2$ such that $\phi^*\w=-\w$. Note that the standard symplectic form obeys this condition. Equip $S^2 \times S^2$ with the product symplectic form $\w \oplus \w$. It is easy to see that the graph of the antipodal map: $x\mapsto (x,\phi(x))$ is an embedded Lagrangian sphere representing the integral class $\overline{B}-\overline{F}$, hence the mod 2 class $B+F$.

Since the mod 2 classes $B$ and $F$ are symmetric, it suffices to construct Lagrangian surfaces for the class $F$.  We will construct such a Lagrangian representative by the real part of $\C\P^2\#  \overline{\C\P^2}$, and embed this Lagrangian surface into the symplectic fiber sum of two copies of $\C\P^2\# \overline{\C\P^2}$.

We recall the operation of symplectic fiber sum briefly (cf. \cite{Gom,MS98}). Let $(X_1,\w_1)$ and  $(X_2,\w_2)$ be symplectic manifolds of same dimension $2n$, and $(Q,\tau)$ be a compact symplectic manifold of dimension $2n-2$. Suppose that 
$$ \iota_i: Q\to X_i$$
are symplectic embeddings such that their images $\iota_i(Q)$ have trivial normal bundles. By the symplectic neighborhood theorem, there are symplectic embeddings 
$$ f_i: Q\times B^2 (\e)  \to X_i, \quad f_i^* \w_i =\tau \oplus dx\wedge dy,$$
such that $f_i(q,0) =\iota_i(q)$ for $q\in Q$, $i=1,2$. 

Let $A(\underline\e,\e)$ be the annulus on $B^2(\e)$ with radius $\underline\e <r<\e$, and $\phi: A(\underline\e,\e) \to A(\underline\e,\e)$ be an area- and orientation-preserving diffeomorphism which swaps the two boundary components. Then the symplectic fiber sum is defined by 
$$X_1 \#_Q X_2 = (X_1 - f_1 ( Q\times B^2(\underline{\e}) ) \bigcup (X_2-f_2 (Q\times B^2(\underline{\e})) /\sim,$$
where 
$$ f_2(q, z) \sim f_1 (q, \phi(z)), \quad \forall (q,z) \in Q\times A(\underline{\e},\e).$$
There is a natural symplectic structure on $X_1 \#_Q X_2$ induced by $\w_1$ and $\w_2$. 

Let us take two copies of $\C\P^2\# \overline{\C\P^2}$ as in the last part of Section 2, and denote them as $X_1$ and $X_2$. Let $L_1$ be the real locus of $X_1$ which is a Lagrangian Klein bottle. Regard $X_1$ and $X_2$ as nontrivial $S^2$-bundles over $S^2$, and perform symplectic fiber sum $X_1 \#_{S^2} X_2$, such that the gluing region in the base of $X_1$ is away from the real locus $\R\P^1 \subset \C\P^1$.  Denote the resulting manifold and symplectic form as $(\hat X, \hat\w)$. Then $\hat X$ is a trivial $S^2$-bundle over $S^2$, since $\pi_1(\text{Diff}^+(S^2))\cong \pi_1(SO(3))\cong \Z_2$. It is easy to see that $L_1$ is embedded in $\hat X$ as a  Lagrangian Klein bottle, representing the fiber class  in $H_2(\hat X,\Z_2)$.
\end{proof}

%\begin{rem}
%It is shown in \cite{N2} that the zero class is not represented by a Lagrangian Klein bottle. 
%\end{rem}

%\subsection{Lagrangian surfaces in $S^2 \times S^2$}

\ms
Let us complete the proof of Theorem \ref{main}. 

\begin{proof}[Proof of Theorem \ref{main}] 
It is given by Proposition~\ref{prop28} and Propsition~\ref{add 4}.
%The case of $\C\P^2\# k\overline{\C\P^2}$ is proved in %Theorem~\ref{cp2} and 
%Proposition~\ref{prop28}. The case of $S^2 \times S^2$ is proved by Proposition~\ref{add 4} and the construction of Lagrangian surfaces in this section. 
\end{proof}

%%%%%%%%%%%%%%%%%%%%%%%%%%%%%%%%%%%%%%%%%%%%%%%%%

\end{document}